\documentclass[
final
]{dmtcs-episciences}


\usepackage[utf8]{inputenc}
\usepackage{subfigure}
\usepackage{amsmath,amssymb,amsthm,mathtools}
%

\usepackage[round]{natbib}

\newtheorem{theorem}{Theorem}[section]
\newtheorem{proposition}[theorem]{Proposition}
\newtheorem{lemma}[theorem]{Lemma}
\newtheorem{corollary}[theorem]{Corollary}
\theoremstyle{definition}
\newtheorem{definition}[theorem]{Definition}
\theoremstyle{remark}
\newtheorem{remark}[theorem]{Remark}

\author[N. B. Huamaní]{N. B. Huamaní\affiliationmark{1}}
\title[Vertex degrees in grid graphs associated with 213-avoiding permutations]{Vertex degrees in grid graphs associated with 213-avoiding permutations}
\affiliation{
  Department of Mathematics and Physics, Universidad Nacional de San Crist\'obal de Huamanga, Per\'u}
\keywords{permutations, pattern avoidance, generating functions, grid graphs, vertex degrees}

\begin{document}
\publicationdata
{vol. 25:3 special issue for main purpose}
{2026}
{1}
{10.46298/dmtcs.10472}
{2026-1-26; None}
{2026-1-26}
\maketitle
\begin{abstract}
Given a permutation of size $n$, we consider its associated grid graph whose $i$th column has height equal to the $i$th entry, with vertical edges between consecutive levels and horizontal edges between equal levels in adjacent columns. We study global degree statistics of these graphs when the permutation is chosen from the Catalan avoidance class $\mathrm{Av}_n(213)$ (and, by reversal, also from $\mathrm{Av}_n(312)$).

We first obtain an explicit closed form for the total number of horizontal edges summed over all permutations in $\mathrm{Av}_n(213)$. We then determine, for each degree $r\in\{1,2,3,4\}$, the total number of degree-$r$ vertices accumulated over the same class, yielding closed expressions in terms of central binomial coefficients and powers of four. The proofs rely on the Catalan decomposition induced by the position of the minimum entry, which leads to gluing identities and algebraic functional equations for ordinary generating functions, completed using global vertex and degree-sum identities.

As a consequence, we derive asymptotic degree proportions for a uniform random permutation in $\mathrm{Av}_n(213)$: the distribution concentrates and the proportion of degree-$4$ vertices tends to $1$, with a deficit of order $n^{-1/2}$.
\end{abstract}

\section{Introduction}\label{sec:intro}

We study global degree statistics in the grid graphs associated with permutations, restricting the underlying permutation to a Catalan
avoidance class.

Let $n\in\mathbb{N}$ and $[n]=\{1,2,\dots,n\}$. Let $S_n$ denote the set of all permutations of $[n]$,
that is, all words $\pi=\pi_1\pi_2\cdots \pi_n$ containing each element of $[n]$ exactly once.
Following \citet{BK}, to each $\pi\in S_n$ we associate a \emph{grid graph} $G_\pi$ with $n$ columns:
column $i$ has height $\pi_i$; we place vertical edges between consecutive levels within each column and horizontal edges between
equal levels in adjacent columns. For $n\ge2$, every vertex of $G_\pi$ has degree in $\{1,2,3,4\}$ \citep{BK}.

Two global quantities are immediate from the definition: the total number of vertices satisfies
$|V(G_\pi)|=\sum_{i=1}^n \pi_i=\binom{n+1}{2}$ (independent of $\pi$), and the total number of vertical edges is
$|E_{\rm vert}(G_\pi)|=\sum_{i=1}^n(\pi_i-1)=\binom{n}{2}$.
In contrast, the horizontal edges depend on $\pi$ and are given by the statistic
\[
H(\pi)=\sum_{t=1}^{n-1}\min\{\pi_t,\pi_{t+1}\}.
\]
In the unrestricted case (averaging over $S_n$), \citet{BK} obtain closed formulas for the total number of
horizontal edges and for the global vertex counts by degree, together with limiting proportions. Moreover, in the final section of
\citet{BK} they propose extending this program to avoidance classes $\mathrm{Av}_n(\tau)$, where $\tau$ is a pattern of length $3$.

In this paper we treat completely the Catalan class $\mathrm{Av}_n(213)$. Unlike the unrestricted case, pattern avoidance imposes a rigid
block structure: if the entry $1$ occurs in position $k$, then every value to the left of $1$ is larger than every value to the right,
forcing a Catalan gluing by lengths. This rigidity is reflected in the column profile of $G_\pi$.
Figure~\ref{fig:ex213} compares two examples in $\mathrm{Av}_4(213)$ with a permutation containing the pattern $213$.

\begin{figure}[t]
\centering
\IfFileExists{Figure213.pdf}{%
  \includegraphics[width=0.99\linewidth]{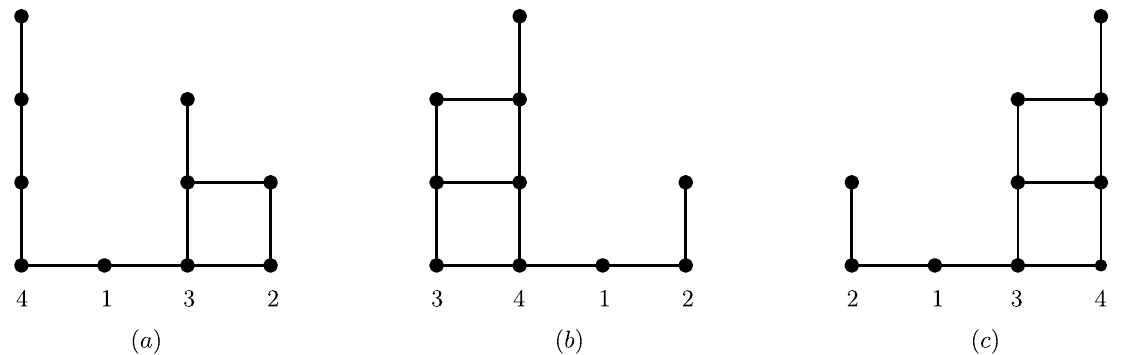}%
}{%
  \fbox{\parbox{0.92\linewidth}{\centering Figure file \texttt{Figure213.pdf} not found.\par
  (This placeholder allows the source to compile without the figure.)}}%
}
\caption{Grid graphs $G_\pi$ for $n=4$:
(a) $\pi=4132$, (b) $\pi=3412$ (both in $\mathrm{Av}_4(213)$), and
(c) $\pi=2134$, which contains the pattern $213$.}
\label{fig:ex213}
\end{figure}

Our main results are closed formulas for global totals as $\pi$ ranges over $\mathrm{Av}_n(213)$.
First, for the total number of horizontal edges
\[
H_n=\sum_{\pi\in\mathrm{Av}_n(213)}H(\pi)
\]
we obtain an explicit expression in terms of central binomial coefficients and powers of $4$.
Second, for each $r\in\{1,2,3,4\}$ we derive a closed form for
\[
Q_r(n):=\sum_{\pi\in\mathrm{Av}_n(213)}\#\{v\in V(G_\pi):\deg(v)=r\}.
\]
As a consequence, the proportion of degree-$4$ vertices tends to $1$ as $n\to\infty$, with a leading correction of order $n^{-1/2}$;
this contrasts with the unrestricted case \citep{BK}, where the limiting proportion for degree $4$ equals $1/2$.
Moreover, by reversal, all results transfer automatically to $\mathrm{Av}_n(312)$.

Our strategy is entirely Catalan: we start from the decomposition induced by the position of the minimum $1$,
translate local contributions into gluing identities and functional equations for ordinary generating functions algebraic over
$\mathbb{Q}(x,\sqrt{1-4x})$, and close the system using global identities (total number of vertices and sum of degrees).
The paper is organized as follows: in Section~\ref{sec:prelim} we fix definitions and local degree criteria; in Section~\ref{sec:decomp}
we establish the decomposition by the position of $1$; in Section~\ref{sec:H} we close $H_n$; in Section~\ref{sec:H5} we introduce the global
degree totals and the identities that close the system; in Section~\ref{sec:H6} we determine $Q_1(n)$; in Section~\ref{sec:H7} we determine
$Q_4(n)$; in Section~\ref{sec:H8} we package the closed forms for $Q_1(n),\dots,Q_4(n)$; in Section~\ref{sec:H9} we extract limiting
proportions and leading corrections and compare with \citep{BK}; and finally we conclude with a brief discussion of remaining cases and
further directions.

\section{Preliminaries}\label{sec:prelim}

We collect the basic definitions and notation: the grid graph $G_\pi$, local degree-by-level criteria,
Catalan enumeration for $\mathrm{Av}_n(213)$ and its generating function, and the reversal symmetry used to transfer results to $\mathrm{Av}_n(312)$.

\subsection{Grid graphs}

\begin{definition}
Let $\pi=\pi_1\cdots \pi_n\in S_n$. The \emph{grid graph} $G_\pi$ has vertex set
\[
V(G_\pi)=\{(i,j):1\le i\le n,\ 1\le j\le \pi_i\}.
\]
Two vertices $(i,j)$ and $(i',j')$ are adjacent if:
(i) $i=i'$ and $|j-j'|=1$ (a vertical edge), or
(ii) $|i-i'|=1$ and $j=j'$ (a horizontal edge), provided both vertices exist.
\end{definition}

For $n\ge2$, every vertex of $G_\pi$ has degree in $\{1,2,3,4\}$ \citep{BK}.
We write $\deg(v)$ for the degree of $v$, and $E_{\rm vert}(G_\pi)$, $E_{\rm hor}(G_\pi)$ for the sets of vertical and horizontal edges.

\subsection{Local degree criteria by level}

\begin{lemma}\label{lem:degByLevel}
Let $\pi\in S_n$ and consider a column $i$ of height $b=\pi_i$.

\smallskip
\noindent\textup{(i) If $2\le i\le n-1$ (internal column) and $a=\pi_{i-1}$, $c=\pi_{i+1}$, then for $1\le s\le b$}
\begin{equation}\label{eq:degByLevelInternal}
\deg(i,s)=\mathbf{1}_{\{s>1\}}+\mathbf{1}_{\{s<b\}}+\mathbf{1}_{\{s\le a\}}+\mathbf{1}_{\{s\le c\}}.
\end{equation}

\smallskip
\noindent\textup{(ii) If $i=1$ (left external column) and $b=\pi_1$, $c=\pi_2$, then for $1\le s\le b$}
\[
\deg(1,s)=\mathbf{1}_{\{s>1\}}+\mathbf{1}_{\{s<b\}}+\mathbf{1}_{\{s\le c\}}.
\]
\textup{Similarly, if $i=n$ and $a=\pi_{n-1}$, then}
\[
\deg(n,s)=\mathbf{1}_{\{s>1\}}+\mathbf{1}_{\{s<b\}}+\mathbf{1}_{\{s\le a\}}.
\]
\end{lemma}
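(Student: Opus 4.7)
The plan is to prove Lemma~\ref{lem:degByLevel} by applying the adjacency definition of $G_\pi$ directly to the fixed vertex $v=(i,s)$ and enumerating its at most four potential neighbors, one per cardinal direction. Each of the four indicators on the right-hand side of \eqref{eq:degByLevelInternal} will correspond to exactly one of these neighbors, and the indicator will equal $1$ precisely when the neighbor lies in $V(G_\pi)$.

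First I would treat the internal case $2\le i\le n-1$. The candidate neighbors of $(i,s)$ are $(i,s-1)$, $(i,s+1)$, $(i-1,s)$, and $(i+1,s)$. Using the definition $V(G_\pi)=\{(i',j'):1\le i'\le n,\ 1\le j'\le \pi_{i'}\}$, each candidate is a genuine vertex iff its second coordinate lies in the allowed range of its column: $(i,s-1)\in V(G_\pi)\iff s-1\ge 1\iff s>1$, and $(i,s+1)\in V(G_\pi)\iff s+1\le b\iff s<b$ (using that $1\le s\le b$, so $s\ge1$ and $s+1\ge2\ge1$ are automatic for the first, and similarly for the second). For the horizontal neighbors, $(i-1,s)\in V(G_\pi)\iff 1\le s\le \pi_{i-1}=a\iff s\le a$, and analogously $(i+1,s)$ exists iff $s\le c$; here $s\ge1$ is automatic. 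Since the adjacency conditions (i) and (ii) in the definition are satisfied by construction for each surviving candidate, summing the four indicators gives $\deg(i,s)$, which is exactly \eqref{eq:degByLevelInternal}.

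For the boundary cases $i=1$ and $i=n$, the argument is identical except that one of the two horizontal candidates does not exist as a column index in $\{1,\dots,n\}$: when $i=1$ the candidate $(i-1,s)=(0,s)$ is absent, and when $i=n$ the candidate $(n+1,s)$ is absent. Removing the corresponding indicator $\mathbf{1}_{\{s\le a\}}$ or $\mathbf{1}_{\{s\le c\}}$ yields the two stated formulas.

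There is no real obstacle here: the lemma is a direct unpacking of the definition of $V(G_\pi)$ and the adjacency rule, and the only care needed is to verify that the ``automatic'' inequalities ($s\ge1$, $s+1\ge1$, etc.) hold from the hypothesis $1\le s\le b$, so that each indicator isolates a single non-trivial existence condition per neighbor.
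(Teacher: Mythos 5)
Your proof is correct and follows essentially the same route as the paper: enumerate the at most four candidate neighbors of $(i,s)$, observe that each exists precisely when the corresponding indicator equals $1$, and sum. The paper's proof is just a more compressed statement of the same direct verification.
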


\begin{proof}
In any column, vertical neighbors exist exactly when $s>1$ (a lower neighbor) and $s<b$ (an upper neighbor).
In an internal column there are two possible horizontal neighbors: to the left if $s\le a$ and to the right if $s\le c$.
In an external column there is only one possible horizontal neighbor. Summing these contributions yields the stated formulas.
\end{proof}

\begin{corollary}\label{cor:deg4FromLevel}
In an internal column with triple $(a,b,c)$, the number of degree-$4$ vertices in the middle column is
\[
\max\{0,\min(a,c,b-1)-1\}.
\]
\end{corollary}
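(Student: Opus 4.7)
The plan is to apply Lemma~\ref{lem:degByLevel}(i) pointwise and count the number of levels $s$ at which all four indicators are active simultaneously. Since the formula for $\deg(i,s)$ in the internal case is a sum of four $0/1$ indicators, having $\deg(i,s)=4$ is equivalent to each of the four events occurring, and these translate into four independent inequalities on $s$.

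Concretely, I would set up the equivalence $\deg(i,s)=4$ if and only if the simultaneous conditions $s>1$, $s<b$, $s\le a$, $s\le c$ hold. Rewriting $s>1$ as $s\ge 2$ and $s<b$ as $s\le b-1$, and combining the three upper bounds, this is the same as $2\le s\le \min(a,c,b-1)$. Hence the number of levels $s$ of degree $4$ in column $i$ equals the cardinality of the integer interval $[2,\min(a,c,b-1)]$.

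Finally I would observe that this interval contains $\min(a,c,b-1)-1$ integers when $\min(a,c,b-1)\ge 2$, and is empty otherwise. Packaging both cases with a $\max$ gives exactly $\max\{0,\min(a,c,b-1)-1\}$, completing the proof.

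There is essentially no obstacle here: the statement is a direct specialization of Lemma~\ref{lem:degByLevel}(i), and the only care needed is the correct boundary handling, namely rewriting the strict inequalities $s>1$ and $s<b$ as $s\ge 2$ and $s\le b-1$ before intersecting with $s\le a$ and $s\le c$, and then guarding against the empty-interval case via the outer $\max\{0,\cdot\}$.
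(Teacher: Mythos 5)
Your proof is correct and follows exactly the paper's argument: specialize Lemma~\ref{lem:degByLevel}(i), note that degree $4$ forces all four indicators to hold, rewrite as $2\le s\le\min(a,c,b-1)$, and count the integer interval with the empty case absorbed by the outer $\max$. No differences worth noting.
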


\begin{proof}
By \eqref{eq:degByLevelInternal}, a level $s$ has degree $4$ if and only if $2\le s\le b-1$ and also $s\le a$ and $s\le c$.
This is equivalent to $2\le s\le \min(a,c,b-1)$, and the count is immediate.
\end{proof}

\begin{corollary}\label{cor:topInternal}
Let $2\le i\le n-1$. The top vertex $(i,\pi_i)$ has degree $1$ if and only if $\pi_i\ge 2$ and
$\pi_i>\max\{\pi_{i-1},\pi_{i+1}\}$.
\end{corollary}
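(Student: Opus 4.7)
The plan is to apply Lemma~\ref{lem:degByLevel}(i) directly to the top level $s=\pi_i$ of the internal column $i$, and to translate the resulting indicator sum into strict comparisons by using the distinctness of the entries of a permutation.

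First, I would set $a=\pi_{i-1}$, $b=\pi_i$, $c=\pi_{i+1}$ and substitute $s=b$ into~\eqref{eq:degByLevelInternal}. The indicator $\mathbf{1}_{\{s<b\}}$ then vanishes automatically, so the degree of the top vertex collapses to the three-term sum $\mathbf{1}_{\{b>1\}}+\mathbf{1}_{\{b\le a\}}+\mathbf{1}_{\{b\le c\}}$. Because $a,b,c$ are values of $\pi$ at three different positions, they are pairwise distinct, so the inequalities $b\le a$ and $b\le c$ coincide with the strict ones $b<a$ and $b<c$; I would use this reduction throughout.

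Next, I would analyze when this three-term sum equals exactly $1$. The three indicators are coupled: if $b=1$, then both $b\le a$ and $b\le c$ hold automatically (since $a,c\ge 2$), giving degree $2$; and if $b\ge 2$ together with $b<a$ or $b<c$, the sum is already at least~$2$. The only remaining configuration yielding $1$ is therefore $b\ge 2$ with $b>a$ and $b>c$, which is precisely the claimed criterion $\pi_i\ge 2$ and $\pi_i>\max\{\pi_{i-1},\pi_{i+1}\}$; the converse follows by substituting back into the indicator sum.

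I do not anticipate any genuine obstacle: the only care needed is the boundary case $b=1$, which must be excluded because it produces a degree-$2$ top vertex rather than a degree-$1$ one, and the strict-versus-non-strict distinction in the indicators, which is handled automatically by the injectivity of $\pi$. Beyond this bookkeeping, the statement is an immediate consequence of Lemma~\ref{lem:degByLevel}(i).
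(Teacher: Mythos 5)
Your proposal is correct and follows essentially the same route as the paper: substitute $s=b$ into~\eqref{eq:degByLevelInternal}, observe that $\mathbf{1}_{\{s<b\}}$ vanishes, and read off that the remaining sum equals $1$ precisely when $b\ge 2$, $b>a$, and $b>c$. Your explicit handling of the boundary case $b=1$ (which forces degree $2$) is slightly more detailed than the paper's one-line conclusion, but the argument is the same.
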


\begin{proof}
Apply \eqref{eq:degByLevelInternal} with $s=b$. Then $\mathbf{1}_{\{s<b\}}=0$ and $\mathbf{1}_{\{s>1\}}=1$ if and only if $b\ge2$.
Moreover $\mathbf{1}_{\{s\le a\}}=\mathbf{1}_{\{b\le a\}}$ and $\mathbf{1}_{\{s\le c\}}=\mathbf{1}_{\{b\le c\}}$.
Hence the degree equals $1$ exactly when $b\ge2$ and $b>a$ and $b>c$.
\end{proof}

\subsection{Pattern avoidance, Catalan numbers, and reversal}

Let $\mathrm{Av}_n(213)\subseteq S_n$ be the set of permutations avoiding the pattern $213$.
It is well known (see, e.g., \citep[Ch.~4]{Bona}) that $|\mathrm{Av}_n(213)|$ is the $n$th Catalan number:
\[
C_n:=|\mathrm{Av}_n(213)|=\mathrm{Cat}_n=\frac{1}{n+1}\binom{2n}{n}.
\]
Write $B_n:=\binom{2n}{n}$, so $C_n=B_n/(n+1)$. The Catalan ordinary generating function is
\begin{equation}\label{eq:Cx}
C(x)=\sum_{n\ge 0} C_n x^n=\frac{1-\sqrt{1-4x}}{2x},
\end{equation}
and it satisfies $C(x)=1+xC(x)^2$.

For $\pi=\pi_1\cdots\pi_n\in S_n$, define its \emph{reversal} by
\[
\mathrm{rev}(\pi):=\pi_n\cdots\pi_1.
\]

\begin{lemma}\label{lem:reversalIso}
For every $\pi\in S_n$, the graphs $G_\pi$ and $G_{\mathrm{rev}(\pi)}$ are isomorphic via
\[
\varphi:V(G_\pi)\to V(G_{\mathrm{rev}(\pi)}),\qquad \varphi(i,s)=(n+1-i,s).
\]
In particular, for every $r\in\{1,2,3,4\}$ we have $Q_r(\pi)=Q_r(\mathrm{rev}(\pi))$ and also $H(\pi)=H(\mathrm{rev}(\pi))$.
\end{lemma}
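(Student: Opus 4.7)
The plan is to verify directly that the map $\varphi$ is a graph isomorphism, and then read off the stated invariance of $Q_r$ and $H$ from standard consequences of isomorphism (plus a one-line substitution for $H$). Let $\sigma:=\mathrm{rev}(\pi)$, so that by definition $\sigma_j=\pi_{n+1-j}$ for $1\le j\le n$. Throughout, I will reason level by level and column by column, using only the definition of $G_\pi$ given in Section~\ref{sec:prelim}.

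First I would show that $\varphi$ is a well-defined bijection between the vertex sets. A vertex $(i,s)$ lies in $V(G_\pi)$ exactly when $1\le s\le \pi_i$; its image $(n+1-i,s)$ lies in $V(G_\sigma)$ exactly when $1\le s\le \sigma_{n+1-i}=\pi_i$, so the two conditions coincide. The column reflection $i\mapsto n+1-i$ is an involution on $[n]$, hence $\varphi$ is a bijection, with inverse given by the same formula exchanging the roles of $\pi$ and $\sigma$.

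Next I would check that $\varphi$ preserves adjacency in both edge types. For a vertical edge of $G_\pi$ joining $(i,s)$ and $(i,s+1)$, the images $(n+1-i,s)$ and $(n+1-i,s+1)$ sit in the same column of $G_\sigma$, and both lie in $V(G_\sigma)$ under the equivalence above; hence they are vertically adjacent in $G_\sigma$. For a horizontal edge of $G_\pi$ joining $(i,s)$ and $(i+1,s)$, the images are $(n+1-i,s)$ and $(n-i,s)$, whose column indices differ by exactly $1$, and the existence condition $s\le\pi_i$ and $s\le\pi_{i+1}$ is equivalent to $s\le\sigma_{n+1-i}$ and $s\le\sigma_{n-i}$, which is exactly the condition for horizontal adjacency in $G_\sigma$. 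The same argument with $\pi$ and $\sigma$ swapped gives the reverse implication, so $\varphi$ is a graph isomorphism.

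Finally I would deduce the two numerical consequences. Since a graph isomorphism preserves the degree of every vertex, it preserves the cardinality of $\{v:\deg(v)=r\}$ for each $r\in\{1,2,3,4\}$, giving $Q_r(\pi)=Q_r(\mathrm{rev}(\pi))$. For $H$, either one appeals again to the isomorphism (horizontal edges in $G_\pi$ correspond bijectively to horizontal edges in $G_\sigma$), or one uses the closed formula directly: reindexing $t\mapsto n-t$ in $H(\mathrm{rev}(\pi))=\sum_{t=1}^{n-1}\min\{\pi_{n+1-t},\pi_{n-t}\}$ yields $\sum_{t=1}^{n-1}\min\{\pi_t,\pi_{t+1}\}=H(\pi)$. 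There is essentially no obstacle here; the only point that requires a moment of care is that the column reflection swaps left and right horizontal neighbors, so one must verify both orientations of a horizontal edge to confirm that $\varphi$ really is adjacency-preserving in both directions.
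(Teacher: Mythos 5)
Your proposal is correct and follows essentially the same approach as the paper: verify that $\varphi$ is a bijection preserving both vertical and horizontal adjacencies, then deduce the invariance of the degree counts from the isomorphism and of $H$ from the reindexing substitution $t\mapsto n-t$. You simply carry out the verification in more detail than the paper does.
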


\begin{proof}
The map $\varphi$ is a bijection and preserves adjacencies: vertical edges remain within the same column at consecutive levels,
and horizontal edges are reflected between adjacent columns. Hence degrees and degree counts are preserved. Moreover,
\[
H(\mathrm{rev}(\pi))=\sum_{t=1}^{n-1}\min\{\pi_{n+1-t},\pi_{n-t}\}=H(\pi).
\]
\end{proof}

\begin{corollary}\label{cor:312}
The map $\pi\mapsto \mathrm{rev}(\pi)$ restricts to a bijection $\mathrm{Av}_n(213)\leftrightarrow \mathrm{Av}_n(312)$.
Consequently, all results of this paper for $\mathrm{Av}_n(213)$ hold verbatim for $\mathrm{Av}_n(312)$, with the same closed formulas.
\end{corollary}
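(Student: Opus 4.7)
The plan is to combine two ingredients: an elementary verification that reversal takes the pattern $213$ to $312$ at the level of occurrences (so $\mathrm{rev}$ restricts to a bijection between the two avoidance classes), together with the statistic-level invariance already proved in Lemma~\ref{lem:reversalIso}. Once both are in hand, every sum $\sum_{\pi\in\mathrm{Av}_n(213)}f(\pi)$ with $f$ reversal-invariant equals $\sum_{\sigma\in\mathrm{Av}_n(312)}f(\sigma)$ by reindexing $\sigma=\mathrm{rev}(\pi)$, and this applies to $H$ and to each $Q_r$.

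The main content is the pattern bijection. Suppose $\sigma=\mathrm{rev}(\pi)$ and that $\sigma$ contains a $213$-pattern at positions $i_1<i_2<i_3$, that is, $\sigma_{i_2}<\sigma_{i_1}<\sigma_{i_3}$. Set $j_k=n+1-i_k$, so that $j_1>j_2>j_3$ and $\pi_{j_k}=\sigma_{i_k}$. Listing the three $\pi$-positions in increasing order as $j_3<j_2<j_1$, the corresponding values are $\sigma_{i_3},\sigma_{i_2},\sigma_{i_1}$, with $\sigma_{i_3}>\sigma_{i_1}>\sigma_{i_2}$, i.e.\ the relative ranks are $3,1,2$ in positional order. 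Hence $\pi$ contains $312$. The argument is symmetric, so $\sigma\in\mathrm{Av}_n(213)\iff \pi\in\mathrm{Av}_n(312)$; since $\mathrm{rev}$ is an involution on $S_n$, it restricts to a bijection $\mathrm{Av}_n(213)\leftrightarrow \mathrm{Av}_n(312)$.

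To conclude, take any statistic $f\colon S_n\to\mathbb{Z}_{\ge 0}$ satisfying $f(\pi)=f(\mathrm{rev}(\pi))$; pairing $\pi$ with $\mathrm{rev}(\pi)$ gives $\sum_{\pi\in\mathrm{Av}_n(213)}f(\pi)=\sum_{\pi\in\mathrm{Av}_n(312)}f(\pi)$. By Lemma~\ref{lem:reversalIso}, $H$ and each $Q_r$ ($r\in\{1,2,3,4\}$) satisfy this invariance, so $H_n$ and $Q_r(n)$ on $\mathrm{Av}_n(213)$ equal their counterparts on $\mathrm{Av}_n(312)$, which is the claimed transfer.

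There is no real obstacle: the argument is entirely formal and the only thing to handle with care is the index bookkeeping $j_k=n+1-i_k$ that turns a $213$-occurrence in $\sigma$ into a $312$-occurrence in $\pi$; everything else is a direct application of Lemma~\ref{lem:reversalIso}.
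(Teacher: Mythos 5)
Your proposal is correct and follows the same route as the paper: the paper's proof simply asserts that reversal sends the pattern $213$ to $312$ and then invokes Lemma~\ref{lem:reversalIso}, while you supply the explicit index bookkeeping $j_k=n+1-i_k$ verifying that assertion. The added detail is accurate (a $213$-occurrence in $\mathrm{rev}(\pi)$ becomes a $312$-occurrence in $\pi$), and the transfer of $H_n$ and $Q_r(n)$ via reversal-invariance matches the paper exactly.
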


\begin{proof}
Reversal sends the pattern $213$ to $312$ (and conversely). The rest follows from Lemma~\ref{lem:reversalIso}.
\end{proof}

\section{Catalan decomposition by the position of the minimum}\label{sec:decomp}

We establish the Catalan block decomposition of $213$-avoiding permutations via the position of the minimum entry $1$,
which is the structural input for all subsequent gluing identities and generating-function equations.

The key structural fact is that, in a $213$-avoiding permutation, all entries to the left of $1$ are larger than all entries to the right.

\begin{lemma}\label{lem:decomp213}
Let $\pi\in\mathrm{Av}_n(213)$ and let $k$ be the position of $1$, i.e.\ $\pi_k=1$.
Then every entry to the left of $1$ is larger than every entry to the right of $1$.
In particular, if $i=k-1$ and $j=n-k$, there exist $\alpha\in\mathrm{Av}_i(213)$ and $\beta\in\mathrm{Av}_j(213)$ such that
\begin{equation}\label{eq:decomp}
\pi=(\alpha+j+1)\ 1\ (\beta+1),
\end{equation}
where $\alpha+j+1$ means adding $j+1$ to each entry of $\alpha$, and $\beta+1$ means adding $1$ to each entry of $\beta$.
Conversely, every concatenation of the form \eqref{eq:decomp} lies in $\mathrm{Av}_n(213)$.
\end{lemma}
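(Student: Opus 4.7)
The plan is to prove both implications by analyzing which values can appear to the left versus the right of the unique position of the minimum, and then showing that the three blocks cannot combine to create a forbidden pattern.

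For the forward direction, I start with $\pi\in\mathrm{Av}_n(213)$ and fix $k$ with $\pi_k=1$. The main structural claim---that every entry left of $1$ exceeds every entry right of $1$---is proved by contradiction: if there exist $a<k<b$ with $\pi_a<\pi_b$, then since $1$ is the global minimum we have $\pi_k=1<\pi_a<\pi_b$, so the triple $(\pi_a,\pi_k,\pi_b)$ is order-isomorphic to $213$, contradicting $\pi\in\mathrm{Av}_n(213)$. Once this is established, a counting argument pins down the value sets: the $i=k-1$ positions to the left must carry the values $\{j+2,\dots,n\}$ and the $j=n-k$ positions to the right must carry the values $\{2,\dots,j+1\}$. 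Subtracting $j+1$ from the left block and $1$ from the right block produces permutations $\alpha\in S_i$ and $\beta\in S_j$ of the form in \eqref{eq:decomp}, and since any $213$-pattern in $\alpha$ or $\beta$ lifts to a $213$-pattern in $\pi$, both factors avoid $213$.

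For the converse, I take any concatenation $\pi=(\alpha+j+1)\,1\,(\beta+1)$ with $\alpha\in\mathrm{Av}_i(213)$ and $\beta\in\mathrm{Av}_j(213)$ and argue again by contradiction: suppose positions $i_1<i_2<i_3$ witness a $213$-pattern in $\pi$, i.e.\ $\pi_{i_2}<\pi_{i_1}<\pi_{i_3}$. I split on the relative position of the three indices with respect to $k$. If the triple lies entirely inside the left block or entirely inside the right block, it descends to a $213$-pattern in $\alpha$ or in $\beta$. The case where one of the three equals $k$ is ruled out because $\pi_k=1$ forces $\pi_{i_2}<1$ (if $i_1=k$ or $i_3=k$) or forces $1<\pi_{i_1}<\pi_{i_3}$ with $\pi_{i_1}$ on the left and $\pi_{i_3}$ on the right, contradicting the block separation $\pi_{i_1}\ge j+2>j+1\ge\pi_{i_3}$. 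The remaining mixed cases (two indices on one side of $k$ and one on the other) are all eliminated by the same separation inequality, since any comparison of a left entry with a right entry forces left $>$ right, making $\pi_{i_1}<\pi_{i_3}$ impossible whenever $i_1<k<i_3$.

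The main obstacle, such as it is, is not technical depth but rather the bookkeeping of the case split in the converse; one must check that every way of distributing three positions among the three blocks either lands inside a single factor or hits the block-separation inequality $\pi_{\text{left}}>j+1\ge\pi_{\text{right}}$. Since there are only finitely many cases and each is immediate from that inequality or from the induction hypothesis on $\alpha,\beta$, the argument closes cleanly, and the standard Catalan convolution $C_n=\sum_{k=1}^{n}C_{k-1}C_{n-k}$ is recovered as a corollary, confirming that the decomposition is bijective.
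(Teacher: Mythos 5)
Your proposal is correct and follows essentially the same route as the paper: the forward direction uses the identical contradiction (a triple $(\pi_p,1,\pi_q)$ with $p<k<q$ and $\pi_p<\pi_q$ realizes $213$), and the converse is the same block-separation argument, which you simply carry out with more explicit case bookkeeping than the paper's one-line "no $213$ pattern can cross the $1$." No gaps.
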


\begin{proof}
If there were positions $p<k<q$ with $\pi_p<\pi_q$, then the triple $(\pi_p,1,\pi_q)$ would realize the pattern $213$:
indeed, $1$ is the smallest, $\pi_q$ the largest, and $\pi_p$ lies strictly between them, a contradiction.
Hence every entry left of $1$ is larger than every entry right of $1$.
Standardizing the left and right blocks yields \eqref{eq:decomp}.
The converse is immediate: in \eqref{eq:decomp} no $213$ pattern can cross the $1$, and within each block avoidance is preserved under standardization.
\end{proof}

In particular, we recover the usual Catalan convolution:
\[
C_n=\sum_{i+j=n-1} C_i C_j \qquad (n\ge 1).
\]

\section{Horizontal edges}\label{sec:H}

We derive a gluing identity for the horizontal-edge statistic $H(\pi)$ under the Catalan decomposition, translate it into a functional equation
for $H(x)$, and solve it to obtain a closed form for $H_n$.

For $\pi\in S_n$ define
\[
H(\pi)=\sum_{t=1}^{n-1}\min\{\pi_t,\pi_{t+1}\},\qquad
H_n:=\sum_{\pi\in\mathrm{Av}_n(213)} H(\pi).
\]
For a statement $A$ we write $\mathbf{1}_{\{A\}}\in\{0,1\}$ for its indicator.

\subsection{A gluing identity and a functional equation}

\begin{lemma}\label{lem:glueH213}
Let $\pi\in\mathrm{Av}_n(213)$ with decomposition $\pi=(\alpha+j+1)\,1\,(\beta+1)$ as in \eqref{eq:decomp},
where $|\alpha|=i$, $|\beta|=j$, and $i+j=n-1$. Then
\begin{equation}\label{eq:glueH213}
H(\pi)=H(\alpha)+H(\beta)
+\mathbf{1}_{\{i\ge 2\}}(i-1)(j+1)+\mathbf{1}_{\{j\ge 2\}}(j-1)+\mathbf{1}_{\{i\ge 1\}}+\mathbf{1}_{\{j\ge 1\}}.
\end{equation}
\end{lemma}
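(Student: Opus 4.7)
The plan is to expand $H(\pi)=\sum_{t=1}^{n-1}\min\{\pi_t,\pi_{t+1}\}$ along the decomposition $\pi=(\alpha+j+1)\,1\,(\beta+1)$ of Lemma~\ref{lem:decomp213}, splitting the index $t$ according to its position relative to the pivot $k=i+1$. Four ranges arise: $t\in\{1,\dots,i-1\}$ where both $\pi_t,\pi_{t+1}$ live in the left block; $t=i$, joining the left block to the pivot $1$; $t=i+1$, joining the pivot to the right block; and $t\in\{i+2,\dots,n-1\}$ where both entries live in the right block. Each contribution will match one of the terms on the right-hand side of \eqref{eq:glueH213}.

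First I would record the value ranges explicitly: for $1\le p\le i$, $\pi_p=\alpha_p+(j+1)\ge j+2$; $\pi_{i+1}=1$; and for $i+2\le p\le n$, $\pi_p=\beta_{p-i-1}+1\in\{2,\dots,j+1\}$. In particular every left-block entry strictly exceeds every right-block entry, and both strictly exceed $1$. This immediately yields the two pivot contributions: if $i\ge 1$ then $\min\{\pi_i,\pi_{i+1}\}=1$, and if $j\ge 1$ then $\min\{\pi_{i+1},\pi_{i+2}\}=1$, producing the term $\mathbf{1}_{\{i\ge 1\}}+\mathbf{1}_{\{j\ge 1\}}$.

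For the two interior sums I would use the shift identity $\min\{a+c,b+c\}=\min\{a,b\}+c$. On the left block, taking $c=j+1$ converts the sum over $t=1,\dots,i-1$ into $H(\alpha)+(i-1)(j+1)$. On the right block, reindexing $t=i+1+s$ for $s=1,\dots,j-1$ and taking $c=1$ converts the right sum into $H(\beta)+(j-1)$. Adding the four pieces reproduces \eqref{eq:glueH213}.

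The main (and essentially only) obstacle is clerical: matching the indicators in the boundary cases. The factors $\mathbf{1}_{\{i\ge 1\}}$ and $\mathbf{1}_{\{j\ge 1\}}$ simply record the existence of the two pivot edges, which is forced by the case split above. The factors $\mathbf{1}_{\{i\ge 2\}}$ and $\mathbf{1}_{\{j\ge 2\}}$ are needed only to suppress the otherwise negative values of $(i-1)(j+1)$ and $(j-1)$ when $i=0$ and $j=0$ respectively; for $i=1$ and $j=1$ these coefficients already vanish, so the indicators are automatic there. Once these small cases are verified, the identity \eqref{eq:glueH213} follows with no further combinatorics.
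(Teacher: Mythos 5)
Your proposal is correct and follows essentially the same route as the paper: split the sum defining $H(\pi)$ into pairs internal to the left block, pairs internal to the right block, and the two pairs adjacent to the column of $1$, then apply the shift identity $\min\{a+c,b+c\}=\min\{a,b\}+c$ with $c=j+1$ and $c=1$ respectively. Your explicit check that the indicators $\mathbf{1}_{\{i\ge2\}}$ and $\mathbf{1}_{\{j\ge2\}}$ are only needed to suppress the $i=0$, $j=0$ cases is a nice touch but does not change the argument.
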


\begin{proof}
Split the sum defining $H(\pi)$ into adjacent pairs according to their position relative to the decomposition
$\pi=(\alpha+j+1)\,1\,(\beta+1)$.

If $i\ge2$, the pairs $(t,t+1)$ with $1\le t\le i-1$ lie entirely in the left block. For each such pair,
\[
\min\{\alpha_t+j+1,\alpha_{t+1}+j+1\}=(j+1)+\min\{\alpha_t,\alpha_{t+1}\},
\]
so the total contribution is $H(\alpha)+(i-1)(j+1)$. If $i\le1$ there are no such pairs, captured by $\mathbf{1}_{\{i\ge2\}}(i-1)(j+1)$.

Similarly, if $j\ge2$, the internal pairs in the right block contribute
\[
H(\beta)+(j-1),
\]
since $\min\{\beta_t+1,\beta_{t+1}+1\}=1+\min\{\beta_t,\beta_{t+1}\}$. If $j\le1$ there is no contribution, captured by $\mathbf{1}_{\{j\ge2\}}(j-1)$.

Finally, consider the pairs involving the column of $1$. If $i\ge1$, the pair between the last column of the left block and $1$
contributes $\min\{\alpha_i+j+1,1\}=1$; and if $j\ge1$, the pair between $1$ and the first column of the right block contributes
$\min\{1,\beta_1+1\}=1$. This yields $\mathbf{1}_{\{i\ge1\}}+\mathbf{1}_{\{j\ge1\}}$.

Collecting terms gives \eqref{eq:glueH213}.
\end{proof}

\begin{proposition}\label{prop:HFE213}
Let $H(x):=\sum_{n\ge 1} H_n x^n$. Then
\begin{equation}\label{eq:HFE213}
(1-2xC(x))\,H(x)=x\bigl(xC'(x)-C(x)+1\bigr)\bigl(xC'(x)+2C(x)\bigr)+2\bigl(C(x)-1-xC(x)\bigr).
\end{equation}
\end{proposition}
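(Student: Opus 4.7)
The plan is to promote the gluing identity of Lemma~\ref{lem:glueH213} into a recurrence for $H_n$ by summing over the Catalan decomposition, and then translate that recurrence term by term into a functional equation for $H(x)$.

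First I would sum \eqref{eq:glueH213} over all $\pi\in\mathrm{Av}_n(213)$. By Lemma~\ref{lem:decomp213}, each such $\pi$ corresponds bijectively to a pair $(\alpha,\beta)\in\mathrm{Av}_i(213)\times\mathrm{Av}_j(213)$ with $i+j=n-1$. Summing the linear terms $H(\alpha)+H(\beta)$ over the Cartesian product yields $\sum_{i+j=n-1}\bigl(H_iC_j+C_iH_j\bigr)$, since $\beta$ (resp.\ $\alpha$) ranges freely over $C_j$ (resp.\ $C_i$) choices. The four indicator terms are constants on each block, so each contributes $C_iC_j$ times its value. Multiplying the resulting recurrence by $x^n$ and summing over $n\ge 1$ turns Catalan convolutions into products, and the shift by $1$ accounting for the column holding the entry $1$ introduces the overall factor $x$; the linear block thus becomes $2xC(x)H(x)$.

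For the quadratic block I would introduce the auxiliary series $A(x):=\sum_{i\ge 2}(i-1)C_ix^i=xC'(x)-C(x)+1$, obtained from $xC'(x)=\sum_i iC_ix^i$ and $C(x)=\sum_i C_ix^i$, and also $\sum_j(j+1)C_jx^j=xC'(x)+C(x)$. The term $\mathbf{1}_{\{i\ge 2\}}(i-1)(j+1)$ then becomes $xA(x)\bigl(xC'(x)+C(x)\bigr)$, and the term $\mathbf{1}_{\{j\ge 2\}}(j-1)$ becomes $xA(x)C(x)$; factoring out $A(x)$ collapses them into $xA(x)\bigl(xC'(x)+2C(x)\bigr)$, which is the first summand on the right-hand side of \eqref{eq:HFE213}. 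The two unit-indicator terms $\mathbf{1}_{\{i\ge 1\}}+\mathbf{1}_{\{j\ge 1\}}$, after weighting by $C_iC_j$ and passing through the generating-function machine, sum to $2xC(x)(C(x)-1)$.

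The last step is to rewrite that residual constant piece using the Catalan identity $C=1+xC^2$, so that $xC^2=C-1$ and hence $2xC(C-1)=2xC^2-2xC=2(C-1)-2xC=2(C-1-xC)$. Transposing the $2xC(x)H(x)$ term to the left yields $(1-2xC(x))H(x)=xA(x)(xC'(x)+2C(x))+2(C(x)-1-xC(x))$, which is \eqref{eq:HFE213}. The main obstacle is the indicator bookkeeping: one must verify that the cutoffs $i,j\ge 2$ and $i,j\ge 1$ translate precisely to the series $A(x)$ (with its $x^0$ and $x^1$ coefficients removed) and to the factor $C(x)-1$ respectively, and then recognize that the two branches of the quadratic block share the factor $A(x)$ so that they combine into the compact product appearing in \eqref{eq:HFE213}.
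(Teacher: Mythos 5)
Your proposal is correct and follows essentially the same route as the paper: sum the gluing identity over the Catalan decomposition, convert the convolutions to products of $H(x)$, $C(x)$, $xC'(x)$ and the truncated series $xC'(x)-C(x)+1$, and move $2xC(x)H(x)$ to the left. The only (immaterial) difference is that you simplify the unit-indicator contribution via $xC^2=C-1$, whereas the paper uses the Catalan convolution $\sum_{i+j=n-1}C_iC_j=C_n$ directly; both give $2(C(x)-1-xC(x))$.
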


\begin{proof}
Sum \eqref{eq:glueH213} over all $\pi\in\mathrm{Av}_n(213)$, grouping by $i+j=n-1$ and the choices
$\alpha\in\mathrm{Av}_i(213)$, $\beta\in\mathrm{Av}_j(213)$.

The contributions $H(\alpha)$ and $H(\beta)$ yield the convolution
$\sum_{i+j=n-1}(C_jH_i+C_iH_j)$, which translates into $2xC(x)H(x)$ in ordinary generating functions.

Terms depending only on lengths become products of OGFs:
\[
\sum_{i\ge0}\sum_{j\ge0}\mathbf{1}_{\{i\ge2\}}(i-1)C_i x^i\cdot \sum_{j\ge0}(j+1)C_j x^j
= \bigl(xC'(x)-C(x)+1\bigr)\bigl(xC'(x)+C(x)\bigr),
\]
and similarly,
\[
\sum_{i\ge0}\sum_{j\ge0} C_i x^i\cdot \mathbf{1}_{\{j\ge2\}}(j-1)C_j x^j
= C(x)\bigl(xC'(x)-C(x)+1\bigr).
\]
Finally,
\[
\sum_{n\ge1}\Bigl(\sum_{i+j=n-1}(\mathbf{1}_{\{i\ge1\}}+\mathbf{1}_{\{j\ge1\}})C_iC_j\Bigr)x^n
=2\sum_{n\ge1}(C_n-C_{n-1})x^n=2\bigl(C(x)-1-xC(x)\bigr).
\]
Combining all contributions and moving $2xC(x)H(x)$ to the left yields \eqref{eq:HFE213}.
\end{proof}

\subsection{Closed form}

\begin{theorem}\label{thm:H}
For $n\ge1$,
\[
H_n=\frac{n}{2}\binom{2n}{n}-4^{\,n-1}.
\]
Equivalently, the ordinary generating function $H(x)=\sum_{n\ge1}H_nx^n$ is
\begin{equation}\label{eq:HxClosed213}
H(x)=x(1-4x)^{-3/2}-x(1-4x)^{-1}.
\end{equation}
\end{theorem}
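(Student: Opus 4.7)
The plan is to solve the functional equation \eqref{eq:HFE213} explicitly by exploiting the closed form of $C(x)$, and then to read off the coefficients of the resulting rational-radical expression.

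First I would rewrite the coefficient $1-2xC(x)$ on the left-hand side using the explicit formula \eqref{eq:Cx}, which gives the key identity $1-2xC(x)=\sqrt{1-4x}$. Dividing \eqref{eq:HFE213} through by $\sqrt{1-4x}$ reduces the problem to evaluating the right-hand side explicitly. To do that I need a usable form for $xC'(x)$: differentiating $C=1+xC^2$ yields $C'(1-2xC)=C^2$, hence
\[
xC'(x)=\frac{xC(x)^2}{\sqrt{1-4x}}=\frac{C(x)-1}{\sqrt{1-4x}},
\]
where the last step uses $xC^2=C-1$. Setting $u=\sqrt{1-4x}$, so that $1-u=2xC$, the two factors in the first product on the RHS of \eqref{eq:HFE213} simplify to
\[
xC'-C+1=\frac{(C-1)(1-u)}{u}=\frac{2xC(C-1)}{u},\qquad xC'+2C=\frac{C-1+2uC}{u},
\]
while the affine tail is $2(C-1-xC)$, which becomes a low-degree polynomial in $u$ once $C=(1-u)/(2x)$ is substituted.

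Next I would collect all contributions, multiply through by $1/u$, and simplify using the single relation $u^2=1-4x$ to kill all higher powers of $u$. What should come out is $x/u^3-x/u^2$, i.e.\ precisely $x(1-4x)^{-3/2}-x(1-4x)^{-1}$. The main obstacle is exactly this bookkeeping: the simplification is routine but error-prone because the RHS mixes $C$, $xC'$, and constants in two separate products. The cleanest way to avoid mistakes is probably to go the other direction: substitute the conjectured closed form \eqref{eq:HxClosed213} into the left-hand side $(1-2xC(x))H(x)=u\cdot(x u^{-3}-x u^{-2})=xu^{-2}-xu^{-1}$, and verify that the RHS of \eqref{eq:HFE213}, after clearing denominators by $u^3$, reduces to the same polynomial in $x$ and $u$ modulo $u^2-(1-4x)$.

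Finally I would extract coefficients from \eqref{eq:HxClosed213}. Since $(1-4x)^{-1/2}=\sum_{n\ge 0}\binom{2n}{n}x^n$, differentiating gives $2(1-4x)^{-3/2}=\sum_{n\ge 1}n\binom{2n}{n}x^{n-1}$, so $[x^n]\,x(1-4x)^{-3/2}=\tfrac{n}{2}\binom{2n}{n}$. On the other hand $[x^n]\,x(1-4x)^{-1}=4^{n-1}$. Subtracting yields $H_n=\tfrac{n}{2}\binom{2n}{n}-4^{n-1}$, as claimed. A quick sanity check at $n=1,2,3$ (where $H_1=0$, $H_2=2$, $H_3=11$ can be computed directly from the five and fourteen permutations in $\mathrm{Av}_2(213)$ and $\mathrm{Av}_3(213)$) should confirm that no constant of integration or sign has been dropped.
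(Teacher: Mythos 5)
Your proposal follows essentially the same route as the paper: substitute $1-2xC(x)=\sqrt{1-4x}$ and the closed form of $C(x)$ into \eqref{eq:HFE213}, simplify to \eqref{eq:HxClosed213}, and extract coefficients from the standard expansions of $(1-4x)^{-1}$ and $(1-4x)^{-3/2}$. One small correction to your sanity check: $\mathrm{Av}_2(213)$ and $\mathrm{Av}_3(213)$ have $2$ and $5$ elements respectively (not $5$ and $14$), and the true value is $H_3=14$ (not $11$), which indeed matches $\tfrac{3}{2}\binom{6}{3}-4^{2}=14$.
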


\begin{proof}
Starting from \eqref{eq:HFE213}, substitute the Catalan identities
$C(x)=\frac{1-\sqrt{1-4x}}{2x}$ and $1-2xC(x)=\sqrt{1-4x}$,
and simplify to obtain \eqref{eq:HxClosed213}.

To extract coefficients, use
\[
(1-4x)^{-1}=\sum_{m\ge0}4^m x^m,\qquad
(1-4x)^{-1/2}=\sum_{m\ge0}\binom{2m}{m}x^m,
\]
and
\[
(1-4x)^{-3/2}=\frac{1}{2}\frac{d}{dx}(1-4x)^{-1/2}
=\sum_{m\ge0}(2m+1)\binom{2m}{m}x^m.
\]
Hence,
\[
[x^n]\,x(1-4x)^{-3/2}=(2n-1)\binom{2n-2}{n-1}
=\frac{n}{2}\binom{2n}{n},
\qquad
[x^n]\,x(1-4x)^{-1}=4^{n-1},
\]
which gives the claimed formula for $H_n$.
\end{proof}

\section{Degree counts: global definitions}\label{sec:H5}

We introduce the global degree totals $Q_r(n)$ and record two universal global identities: the total number of vertices and the sum of degrees,
which later close the system for $(Q_1,Q_2,Q_3,Q_4)$.

For $r\in\{1,2,3,4\}$ define
\[
Q_r(n):=\sum_{\pi\in\mathrm{Av}_n(213)} \#\{v\in V(G_\pi):\deg(v)=r\}.
\]

We also use two global identities (valid for any subclass of $S_n$).

\begin{proposition}\label{prop:VnSig213}
For $n\ge1$,
\begin{align}
V_n &:=\sum_{\pi\in\mathrm{Av}_n(213)} |V(G_\pi)|
= |\mathrm{Av}_n(213)|\cdot \frac{n(n+1)}{2} = \frac{n}{2}\binom{2n}{n},\label{eq:Vn}\\
\Sigma_n &:=\sum_{\pi\in\mathrm{Av}_n(213)} \sum_{v\in V(G_\pi)}\deg(v)
= |\mathrm{Av}_n(213)|\cdot n(n-1)+2H_n
= \frac{2n^2}{n+1}\binom{2n}{n}-2\cdot 4^{\,n-1}.\label{eq:SigmaClosed}
\end{align}
In particular, for $n\ge2$,
\begin{equation}\label{eq:LinSystem}
Q_1(n)+Q_2(n)+Q_3(n)+Q_4(n)=V_n,\qquad
Q_1(n)+2Q_2(n)+3Q_3(n)+4Q_4(n)=\Sigma_n.
\end{equation}
\end{proposition}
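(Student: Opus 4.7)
The proposition packages two universal identities, so the proof splits naturally into an elementary counting argument for $V_n$, a handshake-type argument for $\Sigma_n$ (which reduces to plugging in the closed form for $H_n$ from Theorem~\ref{thm:H}), and the trivial partition of vertices by degree for \eqref{eq:LinSystem}.

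\textbf{Step 1: the vertex total $V_n$.} For any $\pi\in S_n$ the column heights give $|V(G_\pi)|=\sum_{i=1}^n \pi_i=1+2+\cdots+n=\binom{n+1}{2}$, a quantity independent of $\pi$. Summing over $\pi\in\mathrm{Av}_n(213)$ therefore factors as $V_n=|\mathrm{Av}_n(213)|\cdot\binom{n+1}{2}$. Using $|\mathrm{Av}_n(213)|=C_n=\frac{1}{n+1}\binom{2n}{n}$, the factor $n+1$ in the denominator cancels against $\binom{n+1}{2}=\frac{n(n+1)}{2}$, yielding $V_n=\frac{n}{2}\binom{2n}{n}$.

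\textbf{Step 2: the degree sum $\Sigma_n$.} Apply the handshake lemma column by column: in $G_\pi$ the edge set is the disjoint union $E_{\rm vert}(G_\pi)\sqcup E_{\rm hor}(G_\pi)$, where $|E_{\rm vert}(G_\pi)|=\sum_{i=1}^n(\pi_i-1)=\binom{n}{2}$ (again independent of $\pi$) and $|E_{\rm hor}(G_\pi)|=H(\pi)$. Hence $\sum_{v\in V(G_\pi)}\deg(v)=2\bigl(\binom{n}{2}+H(\pi)\bigr)=n(n-1)+2H(\pi)$. Summing over $\pi\in\mathrm{Av}_n(213)$ gives the first equality in \eqref{eq:SigmaClosed}. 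For the closed form, substitute $H_n=\frac{n}{2}\binom{2n}{n}-4^{\,n-1}$ from Theorem~\ref{thm:H} and $C_n=\frac{1}{n+1}\binom{2n}{n}$, then collect the $\binom{2n}{n}$ terms via
\[
\frac{n(n-1)}{n+1}+n=\frac{n(n-1)+n(n+1)}{n+1}=\frac{2n^2}{n+1},
\]
which produces $\Sigma_n=\frac{2n^2}{n+1}\binom{2n}{n}-2\cdot 4^{\,n-1}$.

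\textbf{Step 3: the two linear relations.} For $n\ge 2$, every vertex of $G_\pi$ has degree in $\{1,2,3,4\}$ (recalled from \citep{BK} in Section~\ref{sec:prelim}). Partitioning $V(G_\pi)$ by degree and summing over $\pi\in\mathrm{Av}_n(213)$ gives $\sum_{r=1}^{4}Q_r(n)=V_n$, and weighting each class by its common degree gives $\sum_{r=1}^{4}r\,Q_r(n)=\Sigma_n$. These are exactly the two equations of \eqref{eq:LinSystem}.

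\textbf{Main obstacle.} There is essentially none of a conceptual nature: the content is a bookkeeping reduction to Theorem~\ref{thm:H}. The only place one has to be mildly careful is combining the rational factor $\frac{n(n-1)}{n+1}$ coming from $C_n\cdot n(n-1)$ with the integer contribution $n\binom{2n}{n}$ from $2H_n$ so that the result is manifestly free of denominators, which is just the common-denominator step shown above.
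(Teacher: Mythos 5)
Your proof is correct and follows essentially the same route as the paper: the constant vertex count $\binom{n+1}{2}$, the handshaking lemma with the split into vertical edges (constant $\binom{n}{2}$) and horizontal edges ($H(\pi)$), substitution of Theorem~\ref{thm:H}, and the partition of vertices by degree. The only difference is that you spell out the common-denominator step $\frac{n(n-1)}{n+1}+n=\frac{2n^2}{n+1}$ explicitly, which the paper leaves implicit.
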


\begin{proof}
For each $\pi\in S_n$, the total number of vertices is $|V(G_\pi)|=\sum_{i=1}^n \pi_i=\binom{n+1}{2}$.
Summing over $\mathrm{Av}_n(213)$ gives \eqref{eq:Vn}.

For $\Sigma_n$, use the handshaking lemma: $\sum_v\deg(v)=2|E(G_\pi)|$.
Moreover, $|E_{\rm vert}(G_\pi)|=\sum_{i=1}^n(\pi_i-1)=\binom{n}{2}$ is constant on $S_n$,
whereas $|E_{\rm hor}(G_\pi)|=H(\pi)$. Summing over $\mathrm{Av}_n(213)$ yields
\[
\Sigma_n=2\Bigl(|\mathrm{Av}_n(213)|\cdot \binom{n}{2}+H_n\Bigr),
\]
and substituting $H_n$ from Theorem~\ref{thm:H} gives \eqref{eq:SigmaClosed}.
The system \eqref{eq:LinSystem} is the direct translation of \eqref{eq:Vn} and \eqref{eq:SigmaClosed} in terms of $Q_r(n)$.
\end{proof}

\section{Degree-$1$ vertices}\label{sec:H6}

We compute $Q_1(n)$ by combining a local characterization of degree-$1$ vertices on the boundary columns with Catalan boundary statistics
(initial descents and final ascents) and a generating-function closure for the internal contribution.

\subsection{A local criterion in external columns}

The following criterion matches \citet{BK} and also follows directly from Lemma~\ref{lem:degByLevel}.

\begin{lemma}\label{lem:deg1external}
Let $\pi\in S_n$ and let $G_\pi$ be its grid graph.
In the first column, the number of degree-$1$ vertices equals
\[
\mathbf{1}_{\{\pi_1=1\}}+\mathbf{1}_{\{\pi_1>\pi_2\}},
\]
and in the last column it equals
\[
\mathbf{1}_{\{\pi_n=1\}}+\mathbf{1}_{\{\pi_{n-1}<\pi_n\}}.
\]
\end{lemma}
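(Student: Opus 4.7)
The plan is to apply part (ii) of Lemma~\ref{lem:degByLevel} to each external column and then show that only the bottom and top levels can ever be degree-$1$, reducing the count to two easily-checked conditions.

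First I would handle the first column. Write $b=\pi_1$ and $c=\pi_2$; by Lemma~\ref{lem:degByLevel}(ii),
\[
\deg(1,s)=\mathbf{1}_{\{s>1\}}+\mathbf{1}_{\{s<b\}}+\mathbf{1}_{\{s\le c\}}
\]
for $1\le s\le b$. I would split the range of $s$ into bottom ($s=1$), top ($s=b$, relevant only if $b\ge 2$) and strict interior ($1<s<b$). In the interior both vertical indicators fire, so $\deg(1,s)\ge 2$ and no degree-$1$ vertex occurs there. At the bottom $s=1$ the first indicator vanishes, the second equals $\mathbf{1}_{\{b>1\}}$, and $\mathbf{1}_{\{s\le c\}}=1$ since $c\ge 1$; thus $\deg(1,1)=1$ exactly when $b=1$, i.e.\ $\pi_1=1$. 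At the top $s=b\ge 2$ the second indicator vanishes, the first equals $1$, and $\mathbf{1}_{\{s\le c\}}=\mathbf{1}_{\{b\le c\}}$; hence degree $1$ occurs precisely when $b>c$, i.e.\ $\pi_1>\pi_2$.

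Summing the two contributions gives $\mathbf{1}_{\{\pi_1=1\}}+\mathbf{1}_{\{\pi_1\ge 2,\ \pi_1>\pi_2\}}$. I would then observe that the two events are automatically disjoint (if $\pi_1=1$ then $\pi_2>\pi_1$ since $\pi$ is a permutation and $n\ge 2$), and that $\pi_1>\pi_2$ already forces $\pi_1\ge 2$, so the second indicator collapses to $\mathbf{1}_{\{\pi_1>\pi_2\}}$, yielding the stated formula. The boundary case $n=1$ is covered by the convention that the single vertex has degree $0$ (so no $\pi_2$ is needed), and one can either exclude it or check directly.

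For the last column the argument is symmetric: apply the second displayed formula of Lemma~\ref{lem:degByLevel}(ii) with $b=\pi_n$ and $a=\pi_{n-1}$, partition by level, and conclude that degree $1$ occurs only at the bottom (when $\pi_n=1$) or at the top (when $\pi_n>\pi_{n-1}$, i.e.\ $\pi_{n-1}<\pi_n$), with the two events again disjoint. There is no real obstacle here; the only mildly delicate point is keeping track of the degenerate case $b=1$ (where bottom and top coincide) so as not to double-count, which is automatic once one observes that the $\pi_1=1$ and $\pi_1>\pi_2$ conditions are mutually exclusive.
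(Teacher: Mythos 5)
Your proof is correct and follows essentially the same route as the paper: both apply Lemma~\ref{lem:degByLevel}(ii) and observe that every level other than the extremes has two of the three indicators equal to $1$, reducing the count to the conditions $\pi_1=1$ and $\pi_1>\pi_2$ (and symmetrically for the last column). Your explicit remark that the two events are disjoint, so the indicator of the union equals the sum of indicators, is a small point the paper leaves implicit, but the argument is the same.
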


\begin{proof}
Apply Lemma~\ref{lem:degByLevel}.
In the first column, with $b=\pi_1$ and $c=\pi_2$, for $s<b$ we have $\mathbf{1}_{\{s<b\}}=1$ and also $\mathbf{1}_{\{s\le c\}}=1$,
so $\deg(1,s)\ge2$. Hence only the top vertex $(1,b)$ can have degree $1$, and
\[
\deg(1,b)=\mathbf{1}_{\{b>1\}}+\mathbf{1}_{\{b\le c\}}.
\]
Thus $\deg(1,b)=1$ iff $b=1$ or $b>c$, i.e.\ $\pi_1=1$ or $\pi_1>\pi_2$.

The last column is identical: with $a=\pi_{n-1}$ and $b=\pi_n$, only $(n,b)$ can have degree $1$, and
\[
\deg(n,b)=\mathbf{1}_{\{b>1\}}+\mathbf{1}_{\{b\le a\}}.
\]
Hence $\deg(n,b)=1$ iff $b=1$ or $a<b$, i.e.\ $\pi_n=1$ or $\pi_{n-1}<\pi_n$.
\end{proof}

\subsection{Boundary counts in $\mathrm{Av}_n(213)$}

\begin{lemma}\label{lem:frontback213}
For $m\ge2$, define
\[
D_m:=\#\{\gamma\in\mathrm{Av}_m(213):\gamma_1>\gamma_2\},\qquad
A_m:=\#\{\gamma\in\mathrm{Av}_m(213):\gamma_{m-1}<\gamma_m\}.
\]
Then $D_m=A_m=C_{m-1}$.
\end{lemma}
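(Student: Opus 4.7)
The two halves of the statement are asymmetric: $D_m = C_{m-1}$ admits a direct one-step bijective proof, whereas $A_m = C_{m-1}$ requires the Catalan decomposition of Section~\ref{sec:decomp}. For $D_m$, the plan is to show that the hypothesis $\gamma_1 > \gamma_2$ forces $\gamma_1 = m$. Let $j$ denote the position of $m$ in $\gamma$: the case $j=2$ is impossible because it would give $\gamma_2 = m > \gamma_1$, and the case $j \ge 3$ produces a $213$ occurrence at positions $1, 2, j$ carrying the values $(\gamma_1, \gamma_2, m)$ with $\gamma_2 < \gamma_1 < m$. Hence $j=1$, so $\gamma_1 = m$. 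Conversely, prepending $m$ to any $\sigma \in \mathrm{Av}_{m-1}(213)$ preserves $213$-avoidance, since a maximum in initial position can play neither the role of the ``1'' nor of the ``2'' in a $213$ pattern (it is the largest entry), nor of the ``3'' (which must be the rightmost entry of the occurrence). This establishes a bijection $\{\gamma \in \mathrm{Av}_m(213):\gamma_1 > \gamma_2\} \leftrightarrow \mathrm{Av}_{m-1}(213)$ and yields $D_m = C_{m-1}$.

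For $A_m$, the plan is to apply Lemma~\ref{lem:decomp213} and split by the position $k$ of the entry $1$. If $k=m$, then $\gamma_m = 1$ is the smallest entry, so $\gamma_{m-1} > \gamma_m$ and this case is excluded. If $k=m-1$, the right block has length one and forces $(\gamma_{m-1}, \gamma_m) = (1,2)$, an ascent that is always present; this contributes $C_{m-2}$ permutations (one for each choice of the left block $\alpha \in \mathrm{Av}_{m-2}(213)$). If $1 \le k \le m-2$, both $\gamma_{m-1}$ and $\gamma_m$ lie in the right block $\beta+1$ of length $j = m-k \ge 2$, and the ascent condition at position $m-1$ of $\gamma$ is equivalent to the ascent condition $\beta_{j-1} < \beta_j$ at position $j-1$ of $\beta$; this contributes $C_{k-1}\,A_{m-k}$. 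Assembling the three cases gives the recurrence
\[
A_m = C_{m-2} + \sum_{k=1}^{m-2} C_{k-1}\,A_{m-k}.
\]
Induction on $m$ with the hypothesis $A_l = C_{l-1}$ for all $2 \le l < m$ reduces the sum to $\sum_{p=1}^{m-2} C_{m-2-p}\,C_p$, which by the Catalan convolution $C_{m-1} = \sum_{p=0}^{m-2} C_p C_{m-2-p}$ equals $C_{m-1} - C_{m-2}$. Adding the $C_{m-2}$ contributed by the $k=m-1$ case yields $A_m = C_{m-1}$. The base case $m=2$ is direct: $\mathrm{Av}_2(213) = \{12, 21\}$ gives $A_2 = 1 = C_1$.

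The main obstacle is precisely this asymmetry between the two halves. The clean ``$\gamma_1 = m$ is forced'' argument used for $D_m$ has no counterpart for $A_m$, because a $213$-avoider with $\gamma_{m-1} < \gamma_m$ need not satisfy $\gamma_m = m$ (e.g.\ $\gamma = 312 \in \mathrm{Av}_3(213)$ has $\gamma_3 = 2$). A symmetric attempt via Lemma~\ref{lem:reversalIso} also fails, since reversal carries $\mathrm{Av}_m(213)$ to $\mathrm{Av}_m(312)$ and changes the avoidance class rather than fixing it. This is what forces the more indirect route through the Catalan decomposition by the position of~$1$, the resulting bilinear recurrence, and the Catalan convolution identity.
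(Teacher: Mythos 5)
Your proof is correct, but it is organized differently from the paper's. The paper proves $D_m=C_{m-1}$ by the recursion $D_m=C_{m-2}+\sum_{i\ge2}D_iC_{m-1-i}$ coming from the decomposition of Lemma~\ref{lem:decomp213}, and then dismisses $A_m$ as ``analogous by symmetry (reversing left--right).'' You do the reverse: for $D_m$ you give a direct bijective argument (if $\gamma_1>\gamma_2$ then the maximum $m$ cannot sit in position $2$, and placing it in any position $j\ge3$ creates the occurrence $(\gamma_1,\gamma_2,m)$ of $213$, so $\gamma_1=m$ is forced and deleting it gives a bijection with $\mathrm{Av}_{m-1}(213)$), which is shorter and more transparent than the paper's recursion; and for $A_m$ you carry out in full the case analysis on the position of $1$ and the resulting recurrence $A_m=C_{m-2}+\sum_{k=1}^{m-2}C_{k-1}A_{m-k}$, closed by the Catalan convolution. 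This second half is a genuine improvement in rigor over the paper, since, as you correctly point out, literal left--right reversal (Lemma~\ref{lem:reversalIso}) sends $\mathrm{Av}_m(213)$ to $\mathrm{Av}_m(312)$ and so does not by itself justify $A_m=D_m$; what the paper presumably intends is exactly the analogous recursion on the right block that you write out. Both routes are valid; yours trades the paper's uniform (but partly unexecuted) recursive treatment for a sharper bijection on one side and an explicit verification on the other.
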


\begin{proof}
We prove $D_m=C_{m-1}$; the proof for $A_m$ is analogous by symmetry (reversing left--right).

Let $\gamma\in\mathrm{Av}_m(213)$ and write $\gamma=(\alpha+j+1)\,1\,(\beta+1)$ with $|\alpha|=i$, $|\beta|=j$, and $i+j=m-1$
(Lemma~\ref{lem:decomp213}). If $i=0$, then $\gamma_1=1$ and there is no initial descent. If $i=1$, then necessarily
$\gamma_1=m$ and $\gamma_2=1$, so an initial descent always occurs and the choice reduces to $\beta\in\mathrm{Av}_{m-2}(213)$,
contributing $C_{m-2}$ possibilities. If $i\ge2$, then the first two entries of $\gamma$ lie in the left block and
$\gamma_1>\gamma_2$ is equivalent to $\alpha_1>\alpha_2$. Therefore, for $m\ge2$,
\[
D_m=C_{m-2}+\sum_{i=2}^{m-1} D_i\,C_{m-1-i}.
\]
Using induction and the Catalan convolution $\sum_{p=0}^{m-2}C_pC_{m-2-p}=C_{m-1}$ yields $D_m=C_{m-1}$.
\end{proof}

\subsection{External and internal columns}

\begin{lemma}\label{lem:Q1ext}
For $n\ge2$, the total number of degree-$1$ vertices contributed by the first and last columns satisfies
\[
Q_1^{\rm ext}(n)=4\,C_{n-1}.
\]
\end{lemma}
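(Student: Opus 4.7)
The plan is to combine the two lemmas already proved, so the task essentially reduces to bookkeeping: I sum the local criterion of Lemma~\ref{lem:deg1external} over $\mathrm{Av}_n(213)$ and recognize each term as a Catalan quantity.

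More precisely, by Lemma~\ref{lem:deg1external} the boundary contribution to the degree-$1$ count in a single $\pi$ equals
\[
\mathbf{1}_{\{\pi_1=1\}}+\mathbf{1}_{\{\pi_1>\pi_2\}}+\mathbf{1}_{\{\pi_n=1\}}+\mathbf{1}_{\{\pi_{n-1}<\pi_n\}}.
\]
Summing over $\pi\in\mathrm{Av}_n(213)$ produces four terms. The second and fourth are $D_n$ and $A_n$, which are both $C_{n-1}$ by Lemma~\ref{lem:frontback213}. For the first term, I would use Lemma~\ref{lem:decomp213}: the condition $\pi_1=1$ forces $i=0$ in the decomposition $\pi=(\alpha+j+1)\,1\,(\beta+1)$, so $\pi=1\,(\beta+1)$ with $\beta\in\mathrm{Av}_{n-1}(213)$ arbitrary; hence there are exactly $C_{n-1}$ such permutations. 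By reversal symmetry (Lemma~\ref{lem:reversalIso} combined with the fact that $\pi_n=1$ iff $\mathrm{rev}(\pi)_1=1$, noting reversal maps $\mathrm{Av}_n(213)$ to $\mathrm{Av}_n(312)$, so we instead argue directly: $\pi_n=1$ forces $j=0$ in the decomposition, giving $\pi=(\alpha+1)\,1$ with $\alpha\in\mathrm{Av}_{n-1}(213)$, again $C_{n-1}$ choices). Adding these contributions gives $4C_{n-1}$.

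There is no real obstacle here; the work was absorbed into the two preparatory lemmas. The one small point that deserves a line in the write-up is to confirm that, in the $\pi_1=1$ case, the entry $\pi_2$ is well defined (this uses $n\ge 2$) so that the two indicators $\mathbf{1}_{\{\pi_1=1\}}$ and $\mathbf{1}_{\{\pi_1>\pi_2\}}$ are never both $1$ on the same permutation, preventing any double counting (indeed $\pi_1=1$ implies $\pi_1<\pi_2$); the analogous remark applies at the right boundary. With this in place the sum of the four indicator-sums is exactly $4C_{n-1}$, which is the claim.
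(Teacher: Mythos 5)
Your proposal is correct and follows essentially the same route as the paper: sum the four indicators from Lemma~\ref{lem:deg1external}, identify $\sum_\pi\mathbf{1}_{\{\pi_1=1\}}$ and $\sum_\pi\mathbf{1}_{\{\pi_n=1\}}$ with $C_{n-1}$ via the decomposition of Lemma~\ref{lem:decomp213}, and invoke Lemma~\ref{lem:frontback213} for the other two terms. The extra remark about the two indicators never being simultaneously $1$ is harmless but unnecessary, since Lemma~\ref{lem:deg1external} already asserts that the column count equals the sum of the indicators.
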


\begin{proof}
By Lemma~\ref{lem:deg1external},
\[
Q_1^{\rm ext}(n)=\sum_{\pi\in\mathrm{Av}_n(213)}\Bigl(
\mathbf{1}_{\{\pi_1=1\}}+\mathbf{1}_{\{\pi_1>\pi_2\}}+\mathbf{1}_{\{\pi_n=1\}}+\mathbf{1}_{\{\pi_{n-1}<\pi_n\}}
\Bigr).
\]
Each of the four sums equals $C_{n-1}$. Indeed, $\pi_1=1$ forces $1$ to be in the first position; by Lemma~\ref{lem:decomp213}
this means $i=0$ and leaves $\beta\in\mathrm{Av}_{n-1}(213)$ free, so $\sum_\pi \mathbf{1}_{\{\pi_1=1\}}=C_{n-1}$. Similarly, $\pi_n=1$ forces
$j=0$ and leaves $\alpha\in\mathrm{Av}_{n-1}(213)$ free, giving $\sum_\pi \mathbf{1}_{\{\pi_n=1\}}=C_{n-1}$.

For the comparative events, Lemma~\ref{lem:frontback213} gives
$\sum_\pi \mathbf{1}_{\{\pi_1>\pi_2\}}=D_n=C_{n-1}$ and $\sum_\pi \mathbf{1}_{\{\pi_{n-1}<\pi_n\}}=A_n=C_{n-1}$.
Hence $Q_1^{\rm ext}(n)=4C_{n-1}$.
\end{proof}

\begin{lemma}\label{lem:Q1int}
For $n\ge3$, the total number of degree-$1$ vertices in internal columns satisfies
\[
Q_1^{\rm int}(n)=(n-2)\,C_{n-1}.
\]
\end{lemma}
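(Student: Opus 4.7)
The plan is to reduce $Q_1^{\rm int}(n)$ to a total peak count and then follow the same template as Sections~\ref{sec:decomp}--\ref{sec:H}: derive a gluing identity from the Catalan decomposition, pass to a functional equation for an ordinary generating function, and read off a closed form by coefficient extraction.

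First I would note that in an internal column with height $b=\pi_i$ and neighbors $a=\pi_{i-1}$, $c=\pi_{i+1}$, no vertex strictly below the top can have degree $1$: for $1\le s<b$, the indicators $\mathbf{1}_{\{s<b\}}$ and $\mathbf{1}_{\{s\le c\}}$ in~\eqref{eq:degByLevelInternal} both equal $1$, forcing $\deg(i,s)\ge 2$. By Corollary~\ref{cor:topInternal}, the top vertex $(i,\pi_i)$ has degree $1$ exactly when $\pi_{i-1}<\pi_i>\pi_{i+1}$ (the constraint $\pi_i\ge 2$ being automatic from $\pi_i>\pi_{i-1}\ge 1$). Hence $Q_1^{\rm int}(n)=M_n:=\sum_{\pi\in\mathrm{Av}_n(213)} m(\pi)$, where $m(\pi)$ denotes the number of internal peaks of $\pi$.

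Second, I would apply the decomposition $\pi=(\alpha+j+1)\,1\,(\beta+1)$ from Lemma~\ref{lem:decomp213} and classify a candidate peak position $t\in\{2,\dots,n-1\}$ as either strictly inside the left block, equal to $t=i$, equal to $t=i+1$, equal to $t=i+2$, or strictly inside the right block. Since $\pi_{i+1}=1$ is the global minimum, $t=i+1$ is never a peak; the boundary position $t=i$ (internal as soon as $i\ge 2$) is a peak iff $\alpha_{i-1}<\alpha_i$, because $\pi_{i+1}=1$ makes the right comparison automatic; and $t=i+2$ (internal as soon as $j\ge 2$) is a peak iff $\beta_1>\beta_2$. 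Shifts within each block preserve comparisons, yielding the gluing identity
\[
m(\pi)=m(\alpha)+m(\beta)+\mathbf{1}_{\{i\ge 2\}}\mathbf{1}_{\{\alpha_{i-1}<\alpha_i\}}+\mathbf{1}_{\{j\ge 2\}}\mathbf{1}_{\{\beta_1>\beta_2\}}.
\]
Summing over $\mathrm{Av}_n(213)$ and invoking Lemma~\ref{lem:frontback213} together with the Catalan convolution produces the recursion $M_n=2\sum_{i+j=n-1} C_j M_i+2(C_{n-1}-C_{n-2})$ for $n\ge 2$, which in ordinary generating functions becomes $(1-2xC(x))\,M(x)=2x\bigl[(1-x)C(x)-1\bigr]$. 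Substituting $1-2xC(x)=\sqrt{1-4x}$ and simplifying via~\eqref{eq:Cx} then collapses $M(x)$ to $(1-3x)(1-4x)^{-1/2}-(1-x)$.

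For $n\ge 2$ the polynomial tail contributes nothing, so $[x^n]M(x)=\binom{2n}{n}-3\binom{2n-2}{n-1}$; factoring through $\binom{2n}{n}=\tfrac{2(2n-1)}{n}\binom{2n-2}{n-1}$ reduces this to $\tfrac{n-2}{n}\binom{2n-2}{n-1}=(n-2)C_{n-1}$, which is the claim. The main obstacle I anticipate is the bookkeeping in the case analysis leading to the gluing identity, in particular verifying that the four indicator conditions correctly handle all edge cases $i\in\{0,1\}$ and $j\in\{0,1\}$ without double-counting or missing the two gluing boundaries $t=i$ and $t=i+2$; once this identity is secured, the generating-function closure is a direct algebraic repetition of the strategy already used for $H_n$ in Section~\ref{sec:H}.
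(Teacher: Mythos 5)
Your proposal is correct and follows essentially the same route as the paper: reduce internal degree-$1$ vertices to internal peaks, glue via the position of $1$ with the boundary peaks counted by Lemma~\ref{lem:frontback213}, and close with a Catalan generating function. One minor slip in your first step: to rule out degree~$1$ below the top of an internal column you claim $\mathbf{1}_{\{s\le c\}}=1$ for all $s<b$, which fails when $c<s<b$; the correct argument pairs $\mathbf{1}_{\{s<b\}}=1$ with $\mathbf{1}_{\{s>1\}}=1$ when $s>1$ and with $\mathbf{1}_{\{s\le a\}}=1$ when $s=1$ (since $a\ge1$) --- the conclusion is unaffected. Your careful handling of the edge cases $i,j\le 1$, yielding the boundary constant $2(C_{n-1}-C_{n-2})$ rather than $2C_{n-1}$, is in fact the version that checks out numerically ($M_3=2$, $M_4=10$), and is tighter than the bookkeeping displayed in the paper's own derivation.
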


\begin{proof}
Let $P_n:=Q_1^{\rm int}(n)$. Fix $\pi\in\mathrm{Av}_n(213)$ with decomposition
$\pi=(\alpha+j+1)\,1\,(\beta+1)$, where $|\alpha|=i$, $|\beta|=j$, and $i+j=n-1$.

Internal peaks of $\pi$ split into those entirely within $\alpha$, those entirely within $\beta$, and possible peaks created at the boundary
with the column of $1$. The first two contributions yield, after summing over all choices, the convolution term
$\sum_{i+j=n-1}(C_jP_i+C_iP_j)=2\sum_{i+j=n-1}C_jP_i$.

For the boundary: if $i\ge2$, the last column of the $\alpha$-block is an internal peak of $\pi$ iff it exceeds its left neighbor,
since its right neighbor is $1$; this is equivalent to $\alpha$ ending with a final ascent. Similarly, if $j\ge2$, the first column of the
$\beta$-block is an internal peak of $\pi$ iff $\beta$ begins with an initial descent. By Lemma~\ref{lem:frontback213},
the number of $\alpha\in\mathrm{Av}_i(213)$ with a final ascent is $A_i=C_{i-1}$, and the number of $\beta\in\mathrm{Av}_j(213)$ with an initial descent is
$D_j=C_{j-1}$. Summing over all choices yields the additional term
\[
\sum_{i+j=n-1}(C_jA_i+C_iD_j)=2\sum_{i+j=n-1}C_jC_{i-1}.
\]
By Catalan convolution,
\[
\sum_{i+j=n-1}C_jC_{i-1}=\sum_{p=0}^{n-2}C_pC_{n-2-p}=C_{n-1},
\]
and hence, for $n\ge3$,
\[
P_n=2\sum_{i+j=n-1}C_jP_i+2C_{n-1}.
\]

Passing to OGFs, with $P(x):=\sum_{n\ge0}P_nx^n$ and $P_0=P_1=P_2=0$, the recurrence becomes
\[
P(x)=2xC(x)P(x)+2x\bigl(C(x)-1-x\bigr),
\]
so
\[
P(x)=\frac{2x\bigl(C(x)-1-x\bigr)}{1-2xC(x)}=x\bigl(xC'(x)-C(x)+1\bigr).
\]
Extracting coefficients gives $P_n=(n-2)C_{n-1}$ for $n\ge3$.
\end{proof}

\begin{corollary}\label{cor:Q1}
For $n\ge2$,
\[
Q_1(n)=Q_1^{\rm ext}(n)+Q_1^{\rm int}(n)=(n+2)\,C_{n-1}
=\frac{n+2}{2(2n-1)}\binom{2n}{n}.
\]
\end{corollary}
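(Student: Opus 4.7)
The statement is a direct consequence of the two preceding lemmas, so the plan is essentially an aggregation step plus a binomial identity, with a quick sanity check on the boundary case.

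First, I would partition every degree-$1$ vertex in $G_\pi$, for $\pi\in\mathrm{Av}_n(213)$, according to whether its column is external ($i\in\{1,n\}$) or internal ($2\le i\le n-1$). Summing over the class $\mathrm{Av}_n(213)$ this gives the decomposition $Q_1(n)=Q_1^{\rm ext}(n)+Q_1^{\rm int}(n)$, which is legitimate because, by Lemma~\ref{lem:degByLevel}, each vertex belongs to exactly one column and the external/internal dichotomy of columns is exhaustive and disjoint. Lemma~\ref{lem:Q1ext} supplies $Q_1^{\rm ext}(n)=4\,C_{n-1}$, and Lemma~\ref{lem:Q1int} supplies $Q_1^{\rm int}(n)=(n-2)\,C_{n-1}$, so adding and factoring yields the first equality $Q_1(n)=(n+2)\,C_{n-1}$.

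For the second equality I would convert $C_{n-1}$ into a central binomial coefficient. Using $C_{n-1}=\tfrac{1}{n}\binom{2n-2}{n-1}$ together with the standard identity $\binom{2n}{n}=\tfrac{2(2n-1)}{n}\binom{2n-2}{n-1}$, one gets $C_{n-1}=\tfrac{1}{2(2n-1)}\binom{2n}{n}$, hence $(n+2)C_{n-1}=\tfrac{n+2}{2(2n-1)}\binom{2n}{n}$. These are routine one-line manipulations with no sign or index subtleties.

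The only small nuisance is the boundary case $n=2$, since Lemma~\ref{lem:Q1int} is stated for $n\ge 3$. I would handle it by noting that for $n=2$ there are no internal columns, so $Q_1^{\rm int}(2)=0$, which agrees with the formula $(n-2)C_{n-1}$ evaluated at $n=2$; thus the claim $Q_1(n)=(n+2)C_{n-1}$ already holds at $n=2$ (the two permutations $12$ and $21$ each contribute exactly two degree-$1$ vertices, giving $Q_1(2)=4=4\cdot C_1$). There is no genuine obstacle here: the combinatorial content was absorbed by the gluing identity in Lemma~\ref{lem:Q1int} and the Catalan boundary-statistics computation in Lemma~\ref{lem:frontback213}, so this corollary is a pure bookkeeping step.
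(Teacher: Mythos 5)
Your proposal is correct and follows essentially the same route as the paper: sum Lemma~\ref{lem:Q1ext} and Lemma~\ref{lem:Q1int} and rewrite $C_{n-1}=\frac{1}{2(2n-1)}\binom{2n}{n}$. The explicit check of the boundary case $n=2$ (where the internal-column count vanishes, consistent with $(n-2)C_{n-1}$) is a small but welcome addition that the paper leaves implicit.
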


\begin{proof}
Sum Lemmas~\ref{lem:Q1ext} and \ref{lem:Q1int}, and use
$C_{n-1}=\frac{1}{n}\binom{2n-2}{n-1}=\frac{1}{2(2n-1)}\binom{2n}{n}$.
\end{proof}

\section{Degree-$4$ vertices}\label{sec:H7}

We obtain $Q_4(n)$ using a local degree-$4$ criterion together with a uniform-shift analysis under the Catalan gluing,
leading to a closed generating function and an explicit coefficient formula.

\subsection{Uniform shifts}

The local criterion for counting degree-$4$ vertices in an internal column was established in Corollary~\ref{cor:deg4FromLevel}.
We now record the effect of uniform shifts in a triple of columns, which will be the gluing ingredient for the global closure.

\begin{lemma}\label{lem:shift}
Let an internal column have height $b$ with neighbors of heights $a$ and $c$.
If one replaces $(a,b,c)$ by $(a+t,b+t,c+t)$ with $t\ge1$, then the number of degree-$4$ vertices
in the middle column increases by
\[
\begin{cases}
t, & b\ge 2,\\
t-1, & b=1.
\end{cases}
\]
\end{lemma}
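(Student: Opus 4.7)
The plan is to apply Corollary~\ref{cor:deg4FromLevel} to both triples and compare. Writing $N(a,b,c):=\max\{0,\min(a,c,b-1)-1\}$ for the number of degree-$4$ vertices in an internal column with profile $(a,b,c)$, what we need is $N(a+t,b+t,c+t)-N(a,b,c)$. The key algebraic observation is that the minimum commutes with a uniform shift: since $(a+t)$, $(c+t)$ and $(b+t)-1=(b-1)+t$ all get the same increment, one has
\[
\min(a+t,\,c+t,\,b+t-1)=\min(a,c,b-1)+t.
\]
So the analysis reduces to understanding how the outer $\max\{0,\cdot-1\}$ behaves before and after adding $t$ inside the minimum.

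Next I would introduce $m:=\min(a,c,b-1)$ and split into the two cases dictated by the statement. If $b\ge 2$, then $b-1\ge 1$, and since $a,c\ge 1$ (they are entries of a permutation on the neighboring columns), we have $m\ge 1$. Hence $N(a,b,c)=m-1\ge 0$ and, after the shift, $m+t\ge 1+t\ge 2$ so $N(a+t,b+t,c+t)=m+t-1$; the difference is exactly $t$. If $b=1$, then $b-1=0$ and $m=0$, giving $N(a,b,c)=0$. After the shift, $m+t=t\ge 1$, so $N(a+t,b+t,c+t)=\max\{0,t-1\}=t-1$; the difference is $t-1$, matching the statement.

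The only subtlety is the boundary case $b=1$: there the formula from Corollary~\ref{cor:deg4FromLevel} is effectively clipped by the outer $\max$, and the shift breaks that clipping by exactly one unit. This accounts for the $-1$ correction in the second case. Apart from that, the computation is a direct substitution, so I do not expect any real obstacle; the lemma is essentially the observation that uniform shifts commute with $\min$, combined with a careful tracking of when the outer $\max\{0,\cdot\}$ is active.
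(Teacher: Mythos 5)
Your proof is correct and follows essentially the same route as the paper: apply Corollary~\ref{cor:deg4FromLevel} to both triples, use that a uniform shift commutes with the minimum, and split on $b\ge 2$ versus $b=1$ to track when the outer $\max\{0,\cdot\}$ is active. No issues.
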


\begin{proof}
By Corollary~\ref{cor:deg4FromLevel}, before the shift the number of degree-$4$ vertices in the middle column is
$\max\{0,\min(a,c,b-1)-1\}$, and after the shift it is
\[
\max\{0,\min(a+t,c+t,(b+t)-1)-1\}=\max\{0,\,t+\min(a,c,b-1)-1\}.
\]
If $b\ge2$, then $b-1\ge1$ and since $a,c\ge1$ we have $\min(a,c,b-1)\ge1$, so the increment is exactly $t$.
If $b=1$, the first term is $0$ and the second is $\max\{0,t-1\}=t-1$.
\end{proof}

\subsection{Closure for $Q_4(n)$}

Let $Q_4(n)$ be the global total of degree-$4$ vertices as $\pi$ ranges over $\mathrm{Av}_n(213)$.
There are no degree-$4$ vertices in external columns, so $Q_4(n)$ equals the internal contribution.

Write $(t)_+:=\max\{t,0\}$. Also define, for $m\ge0$,
\[
J_m:=\#\{\gamma\in\mathrm{Av}_m(213):\text{the entry }1\text{ lies in an internal position}\}
=\begin{cases}
C_m-2C_{m-1}, & m\ge 2,\\
0, & m\le 1.
\end{cases}
\]

\begin{theorem}\label{thm:Q4}
For $n\ge2$,
\[
Q_4(n)=\frac{4n^3-7n^2+29n-20}{4(n+1)(2n-1)}\binom{2n}{n}-7\cdot 4^{\,n-2}.
\]
\end{theorem}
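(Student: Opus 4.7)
I would mirror the approach of Theorem~\ref{thm:H}: derive a gluing identity for the local degree-$4$ count $q_4(\pi) := \#\{v\in V(G_\pi):\deg(v)=4\}$ under the Catalan decomposition of Lemma~\ref{lem:decomp213}, translate it into a functional equation for the OGF $Q_4(x):=\sum_{n\ge 0} Q_4(n)\,x^n$, and then extract coefficients.

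\medskip

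First I would carry out a local analysis. Write $\pi=(\alpha+j+1)\,1\,(\beta+1)$ with $|\alpha|=i$, $|\beta|=j$, $i+j=n-1$. The internal columns of $\pi$ split into five groups: (a) strictly inside the $\alpha$-block; (b) the last column of the $\alpha$-block; (c) the column of $1$; (d) the first column of the $\beta$-block; (e) strictly inside the $\beta$-block. Column (c) has height~$1$, so contributes nothing. Columns (b) and (d) each neighbor the height-$1$ column, so in their triple $(a,b,c)$ one has $\min(a,c,b-1)\le 1$, and Corollary~\ref{cor:deg4FromLevel} forces a zero contribution. For (a) and (e), the triples in $\pi$ are uniform shifts of the corresponding triples in $\alpha$ and $\beta$ (by $j+1$ and $1$ respectively). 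Applying Lemma~\ref{lem:shift} position by position, while bookkeeping the exceptional $b=1$ case through the indicator that the entry $1$ sits at an internal position, yields
\[
q_4(\pi)=q_4(\alpha)+q_4(\beta)+(j+1)(i-2)_+ +(j-2)_+ -\mathbf{1}_{\{1\text{ internal in }\alpha\}}-\mathbf{1}_{\{1\text{ internal in }\beta\}},
\]
where $(t)_+:=\max\{t,0\}$.

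\medskip

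Next I would sum over $\mathrm{Av}_n(213)$ via Lemma~\ref{lem:decomp213} and pass to OGFs. The convolutions from the $q_4(\alpha),q_4(\beta)$ pieces give $2xC(x)\,Q_4(x)$; the length-dependent terms assemble into $A(x):=\sum_{m\ge 0}(m-2)_+ C_m x^m = xC'(x)-2C(x)+x+2$; and the indicator-counts are packaged by $\tilde{J}(x):=\sum_{m\ge 0} J_m x^m = (1-2x)C(x)+x-1$, obtained from $J_m=C_m-2C_{m-1}$ for $m\ge 2$. The result is the functional equation
\[
(1-2xC(x))\,Q_4(x) \;=\; x\,A(x)\bigl[2C(x)+xC'(x)\bigr]\;-\;2xC(x)\,\tilde{J}(x).
\]
Using $1-2xC(x)=\sqrt{1-4x}$, $xC'(x)=(C(x)-1)/\sqrt{1-4x}$, and $C(x)=(1-\sqrt{1-4x})/(2x)$, I would simplify the right-hand side and divide by $\sqrt{1-4x}$ to express $Q_4(x)$ as a $\mathbb{Q}(x)$-linear combination of $(1-4x)^{-5/2},\,(1-4x)^{-3/2},\,(1-4x)^{-1/2},\,(1-4x)^{-1}$. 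Coefficients are then extracted via
\[
[x^m](1-4x)^{-1/2}=\binom{2m}{m},\quad [x^m](1-4x)^{-3/2}=(2m+1)\binom{2m}{m},\quad [x^m](1-4x)^{-5/2}=\tfrac{(2m+1)(2m+3)}{3}\binom{2m}{m},
\]
combined with $[x^m](1-4x)^{-1}=4^m$ and the Catalan identity $n\binom{2n}{n}=2(2n-1)\binom{2n-2}{n-1}$.

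\medskip

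The main obstacle is the final algebra: several fractional-power contributions must be combined over the common denominator $4(n+1)(2n-1)$ to reveal the cubic numerator $4n^3-7n^2+29n-20$, while the residual powers of $4$ (inherited through $\tilde{J}$ from the same $(1-4x)^{-1}$ mechanism that produced $-4^{n-1}$ in Theorem~\ref{thm:H}) must consolidate into exactly $-7\cdot 4^{n-2}$. A decisive sanity check is the vanishing at $n=2,3$, where the $\binom{2n}{n}$-piece and the $4^{n-2}$-piece cancel exactly; and at $n=4$ the formula gives $Q_4(4)=8$, which agrees with direct enumeration over the $14$ permutations of $\mathrm{Av}_4(213)$.
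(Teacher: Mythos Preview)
Your proposal is correct and follows essentially the same route as the paper: the same local analysis of the five column groups, the same gluing identity via Lemma~\ref{lem:shift}, the same functional equation $(1-2xC(x))Q_4(x)=xA(x)[2C(x)+xC'(x)]-2xC(x)J(x)$ (the paper writes the bracket as $C(x)+(xC(x))'$, which is the same thing), and the same coefficient extraction. The only cosmetic difference is in the choice of basis for the final expansion---the paper uses $(1-4x)^{-1},(1-4x)^{-2},(1-4x)^{-3/2}$ rather than your list including $(1-4x)^{-5/2}$---but either decomposition leads to the stated closed form.
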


\begin{proof}
Fix $\pi\in\mathrm{Av}_n(213)$ and write $\pi=(\alpha+j+1)\,1\,(\beta+1)$, with $|\alpha|=i$, $|\beta|=j$, and $i+j=n-1$
(Lemma~\ref{lem:decomp213}). By Corollary~\ref{cor:deg4FromLevel}, an internal column cannot contain degree-$4$ vertices if one of its
neighbors has height $1$. In particular, the columns adjacent to $1$ in $\pi$ contribute nothing to $Q_4(\pi)$.
Thus every contribution comes from internal columns fully contained in $\alpha$ or fully contained in $\beta$.

When embedding the left block into $\pi$, all its heights increase uniformly by $t=j+1$.
Applying Lemma~\ref{lem:shift} to each internal column of $\alpha$, the shift creates $t$ new degree-$4$ vertices per internal column,
except possibly at the internal column whose original height is $1$, where it creates $t-1$.
Since $\alpha$ has $(i-2)_+$ internal columns and there is exactly one entry of value $1$ in $\alpha$, the total increment is
$(j+1)(i-2)_+ - \mathbf{1}_{\{1\text{ is internal in }\alpha\}}$.
For $\beta$, the shift is $t=1$, and the total increment is
$(j-2)_+ - \mathbf{1}_{\{1\text{ is internal in }\beta\}}$.
Therefore, for each pair $(\alpha,\beta)$,
\begin{equation}\label{eq:Q4glue}
Q_4(\pi)=Q_4(\alpha)+Q_4(\beta)+(j+1)(i-2)_+ +(j-2)_+ - \mathbf{1}_{\{1\text{ internal in }\alpha\}}-\mathbf{1}_{\{1\text{ internal in }\beta\}}.
\end{equation}

Summing \eqref{eq:Q4glue} over $\alpha\in\mathrm{Av}_i(213)$, $\beta\in\mathrm{Av}_j(213)$ and then over $i+j=n-1$ yields a Catalan convolution recurrence.
Passing to OGFs leads to the linear functional equation
\begin{equation}\label{eq:Q4FE213}
(1-2xC(x))Q_4(x)=xA(x)\bigl(C(x)+(xC(x))'\bigr)-2xC(x)J(x),
\end{equation}
where $Q_4(x)=\sum_{n\ge0}Q_4(n)x^n$, $A(x)=\sum_{m\ge0}(m-2)_+C_mx^m$, and $J(x)=\sum_{m\ge0}J_mx^m$.

From the definitions,
\[
A(x)=xC'(x)-2C(x)+2+x,\qquad J(x)=(1-2x)C(x)+x-1.
\]
Substitute these into \eqref{eq:Q4FE213} and use $C(x)=\frac{1-\sqrt{1-4x}}{2x}$ and $1-2xC(x)=\sqrt{1-4x}$.
After algebraic simplification, one obtains
\begin{equation}\label{eq:Q4xClosed}
Q_4(x)=\frac{8x^4-150x^3+157x^2-50x+5 + (36x^3-87x^2+40x-5)\sqrt{1-4x}}{2x(1-4x)^2}.
\end{equation}

Finally, \eqref{eq:Q4xClosed} can be written as a linear combination of $(1-4x)^{-1}$, $(1-4x)^{-2}$, and $(1-4x)^{-3/2}$,
and coefficients are extracted using standard expansions (see, e.g., \citep[Ch.~VI]{FS}).
This yields, for $n\ge2$,
\[
Q_4(n)=\frac{4n^3-7n^2+29n-20}{4(n+1)(2n-1)}\binom{2n}{n}-7\cdot4^{\,n-2}.
\]
\end{proof}

\section{Closed forms for $Q_1,\dots,Q_4$}\label{sec:H8}

We package the closed forms for $Q_1(n)$ and $Q_4(n)$ and then solve for $Q_2(n)$ and $Q_3(n)$ from the global linear identities,
yielding explicit formulas for all degree totals.

\begin{theorem}\label{thm:Qall213}
For $n\ge2$, with $B_n=\binom{2n}{n}$, we have
\begin{align*}
Q_1(n) &= \frac{n+2}{2(2n-1)}\,B_n,\\[2mm]
Q_2(n) &= \frac{(12n^2+44n-112)\,B_n+(2n^2+n-1)\,4^n}{16(n+1)(2n-1)},\\[2mm]
Q_3(n) &= \frac{(8n^2-96n+88)\,B_n+(6n^2+3n-3)\,4^n}{8(n+1)(2n-1)},\\[2mm]
Q_4(n) &= \frac{(4n^3-7n^2+29n-20)\,B_n}{4(n+1)(2n-1)}-7\cdot 4^{\,n-2}.
\end{align*}
\end{theorem}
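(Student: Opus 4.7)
The plan is to notice that by this stage the analytic work is essentially finished: Corollary~\ref{cor:Q1} gives $Q_1(n)$ and Theorem~\ref{thm:Q4} gives $Q_4(n)$, so what remains is to extract $Q_2(n)$ and $Q_3(n)$. Rather than setting up two further generating-function closures (which would be awkward, since the local criteria for degrees $2$ and $3$ in an internal column decompose into several subcases according to which of the three inequalities in Lemma~\ref{lem:degByLevel} fails), I would invert the $2\times 2$ linear system supplied by the two global identities of Proposition~\ref{prop:VnSig213}. This is exactly why those identities were recorded in advance.

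Treating $Q_1(n)$ and $Q_4(n)$ as knowns, the equations
\[
Q_1+Q_2+Q_3+Q_4=V_n,\qquad Q_1+2Q_2+3Q_3+4Q_4=\Sigma_n
\]
yield the closed expressions
\[
Q_3=\Sigma_n-2V_n+Q_1-2Q_4,\qquad Q_2=3V_n-\Sigma_n-2Q_1+Q_4.
\]
Into these I would substitute $V_n=\tfrac{n}{2}B_n$, $\Sigma_n=\tfrac{2n^2}{n+1}B_n-\tfrac{1}{2}\cdot 4^n$, $Q_1(n)=\tfrac{n+2}{2(2n-1)}B_n$, and $Q_4(n)=\tfrac{4n^3-7n^2+29n-20}{4(n+1)(2n-1)}B_n-\tfrac{7}{16}\cdot 4^n$, and then simplify term by term.

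The verification splits cleanly into the $B_n$-part and the $4^n$-part. For the $4^n$-part the arithmetic is immediate: the coefficient of $4^n$ in $Q_3$ equals $-\tfrac{1}{2}-2\cdot(-\tfrac{7}{16})=\tfrac{3}{8}$, and in $Q_2$ it equals $-\tfrac{7}{16}+\tfrac{1}{2}=\tfrac{1}{16}$; the algebraic identity $(2n-1)(n+1)=2n^2+n-1$ then rewrites these constants over the denominators $8(n+1)(2n-1)$ and $16(n+1)(2n-1)$ with the claimed numerators $6n^2+3n-3$ and $2n^2+n-1$. For the $B_n$-part I would clear the common denominators $n+1$ and $2n-1$, reducing each identity to a polynomial identity in $n$; for $Q_3$ the numerator over $2(n+1)(2n-1)$ reduces to
\[
2n(n-1)(2n-1)+(n+2)(n+1)-(4n^3-7n^2+29n-20)=2n^2-24n+22,
\]
and rescaling by $4$ gives the claimed $8n^2-96n+88$ over $8(n+1)(2n-1)$, while for $Q_2$ the analogous expansion produces $3n^2+11n-28$ over $4(n+1)(2n-1)$, equivalently $12n^2+44n-112$ over $16(n+1)(2n-1)$.

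There is no genuine obstacle here beyond careful bookkeeping; the only thing worth flagging is that what makes the final expressions look clean is precisely the factorization $(2n-1)(n+1)=2n^2+n-1$, which forces the $4^n$-contribution to $Q_2(n)$ and $Q_3(n)$ to be a pure constant multiple of $4^n$ rather than a genuine rational function of $n$. That this happens at all is not automatic from the inputs, and serves as a useful internal consistency check on the independently derived formulas for $Q_1(n)$ and $Q_4(n)$ via the handshaking identity.
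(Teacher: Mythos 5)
Your proposal is correct and follows essentially the same route as the paper: take $Q_1(n)$ and $Q_4(n)$ from Corollary~\ref{cor:Q1} and Theorem~\ref{thm:Q4}, then solve the two global identities of Proposition~\ref{prop:VnSig213} for $Q_2(n)$ and $Q_3(n)$, and your arithmetic (including the $4^n$-coefficients $1/16$ and $3/8$ and the polynomial numerators $3n^2+11n-28$ and $2n^2-24n+22$) checks out.
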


\begin{proof}
The formula for $Q_1(n)$ is Corollary~\ref{cor:Q1}, and the formula for $Q_4(n)$ is Theorem~\ref{thm:Q4}.
For $Q_2(n)$ and $Q_3(n)$, use the linear system \eqref{eq:LinSystem} with $V_n$ and $\Sigma_n$ given by
\eqref{eq:Vn} and \eqref{eq:SigmaClosed}. Set
\[
S_1:=V_n-Q_1(n)-Q_4(n)=Q_2(n)+Q_3(n),\qquad
S_2:=\Sigma_n-Q_1(n)-4Q_4(n)=2Q_2(n)+3Q_3(n).
\]
Then $Q_3(n)=S_2-2S_1$ and $Q_2(n)=S_1-Q_3(n)$.
Substituting the closed expressions for $V_n,\Sigma_n,Q_1(n),Q_4(n)$ and simplifying gives the stated formulas.
\end{proof}

\begin{remark}
If $\pi$ is chosen uniformly at random from $\mathrm{Av}_n(213)$, then
\[
\mathbb{E}\bigl[\#\{v\in V(G_\pi):\deg(v)=r\}\bigr]=\frac{Q_r(n)}{C_n}\qquad (r=1,2,3,4),
\]
and also $\mathbb{E}[H(\pi)]=\dfrac{H_n}{C_n}$.
\end{remark}

\section{Asymptotics and comparison with the unrestricted case}\label{sec:H9}

We use standard asymptotics for central binomial coefficients to extract limiting proportions and leading corrections for $Q_r(n)/V_n$,
and we compare the resulting degree distribution in $\mathrm{Av}_n(213)$ with the unrestricted case studied in \citet{BK}.

Let $V_n=\frac{n}{2}B_n$ be the total number of vertices summed over $\mathrm{Av}_n(213)$.
Using the classical asymptotic $B_n\sim \dfrac{4^n}{\sqrt{\pi n}}$ (see, e.g., \citep[App.~A]{FS}) together with Theorem~\ref{thm:Qall213}, we obtain
\[
\frac{Q_1(n)}{V_n}\to 0,\qquad
\frac{Q_2(n)}{V_n}\to 0,\qquad
\frac{Q_3(n)}{V_n}\to 0,\qquad
\frac{Q_4(n)}{V_n}\to 1.
\]
Moreover, the subdominant term of $Q_4(n)$ is $-7\cdot 4^{n-2}$, and therefore
\[
\frac{Q_4(n)}{V_n}=1-\frac{7}{8}\sqrt{\pi}\,n^{-1/2}+O(n^{-1}),
\]
in contrast with the unrestricted case \citep{BK}, where the limiting proportion for degree $4$ equals $1/2$.

\begin{corollary}\label{cor:propsLeading}
Let $V_n=\frac{n}{2}B_n$. As $n\to\infty$,
\begin{align*}
\frac{Q_1(n)}{V_n} &= \frac{1}{2n}+O(n^{-2}),\\
\frac{Q_2(n)}{V_n} &= \frac{\sqrt{\pi}}{8}\,n^{-1/2}+O(n^{-1}),\\
\frac{Q_3(n)}{V_n} &= \frac{3\sqrt{\pi}}{4}\,n^{-1/2}+O(n^{-1}),\\
\frac{Q_4(n)}{V_n} &= 1-\frac{7\sqrt{\pi}}{8}\,n^{-1/2}+O(n^{-1}).
\end{align*}
\end{corollary}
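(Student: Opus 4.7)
The plan is to derive each asymptotic by plugging the closed forms of Theorem~\ref{thm:Qall213} into the ratio $Q_r(n)/V_n$, with $V_n=\tfrac{n}{2}B_n$, and then expanding using Stirling's approximation. The key arithmetic identity to carry around is $4^n/B_n=\sqrt{\pi n}\bigl(1+O(n^{-1})\bigr)$, which follows from $B_n=\tfrac{4^n}{\sqrt{\pi n}}(1-\tfrac{1}{8n}+O(n^{-2}))$. Thus, every term of the form $P(n)B_n/[R(n)\cdot\tfrac{n}{2}B_n]$ reduces to a rational function of $n$ that admits a clean Laurent expansion, while every term of the form $P(n)4^n/[R(n)\cdot\tfrac{n}{2}B_n]$ contributes on the scale $n^{-1/2}$.

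First I would handle $Q_1(n)/V_n$, which is purely rational: substituting $Q_1(n)=\tfrac{n+2}{2(2n-1)}B_n$ gives $(n+2)/[n(2n-1)]$, and a straightforward expansion yields $\tfrac{1}{2n}+O(n^{-2})$. For $Q_2(n)$ and $Q_3(n)$, I would split each into a \emph{Catalan part} (with $B_n$) and a \emph{power-of-four part} (with $4^n$). The Catalan part, after dividing by $V_n$, produces a rational function whose leading behavior is $O(n^{-1})$, and therefore is absorbed in the error. The power-of-four part, after multiplying and dividing by $\sqrt{\pi n}$, yields a rational function times $n^{-1/2}\sqrt{\pi}$; extracting leading coefficients from the polynomials $2n^2+n-1$ and $6n^2+3n-3$ divided by $8n(n+1)(2n-1)$ and $4n(n+1)(2n-1)$ respectively gives the claimed constants $\tfrac{\sqrt{\pi}}{8}$ and $\tfrac{3\sqrt{\pi}}{4}$.

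For $Q_4(n)/V_n$, the situation is inverted: the Catalan part $(4n^3-7n^2+29n-20)/[2n(n+1)(2n-1)]$ now has degree matching the denominator, producing a rational function that tends to $1$ with an $O(n^{-1})$ correction. The power-of-four correction $-7\cdot 4^{n-2}/V_n=-7\cdot 4^{n-1}/(2nB_n)$ is of order $n^{-1/2}$ with leading coefficient $-\tfrac{7\sqrt{\pi}}{8}$ by the same $4^n/B_n$ asymptotic. Combining these two contributions yields the stated $1-\tfrac{7\sqrt{\pi}}{8}n^{-1/2}+O(n^{-1})$.

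There is no real obstacle here, since all the hard work has been done in deriving the exact closed forms; the proof is essentially bookkeeping. The mildly delicate point I would underline is that for $Q_2$ and $Q_3$ the dominant contribution comes from the \emph{non-Catalan} component (via $4^n/B_n\sim\sqrt{\pi n}$), whereas for $Q_4$ the dominant $1$ is Catalan and the $n^{-1/2}$ correction is non-Catalan; this inversion explains why the three middle asymptotics all sit on the $n^{-1/2}$ scale while $Q_1/V_n$ decays faster, and why the negative $-7\cdot 4^{n-2}$ in Theorem~\ref{thm:Q4} is precisely what governs the $n^{-1/2}$ deficit toward the limit $1$.
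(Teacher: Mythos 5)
Your proposal is correct and follows essentially the same route as the paper: substitute the closed forms of Theorem~\ref{thm:Qall213}, use $B_n\sim 4^n/\sqrt{\pi n}$ (equivalently $4^n/B_n\sim\sqrt{\pi n}$), and observe that the $4^n$ components dominate at scale $n^{-1/2}$ for $Q_2,Q_3$ and supply the correction for $Q_4$, while $Q_1/V_n$ is purely rational. All the leading constants you extract match the paper's.
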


\begin{proof}
Use $B_n\sim \dfrac{4^n}{\sqrt{\pi n}}$ (e.g.\ \citep[App.~A]{FS}) together with the closed forms in Theorem~\ref{thm:Qall213}.
For $Q_2$ and $Q_3$, the $4^n$ terms dominate the $B_n$ terms by a factor of $\sqrt{n}$; their coefficients tend to $1/16$ and $3/8$,
respectively, yielding the stated constants after dividing by $V_n\sim \dfrac{\sqrt{n}}{2\sqrt{\pi}}\,4^n$.
For $Q_4$, the main term matches $V_n$ and the first correction comes from $-7\cdot4^{n-2}=-(7/16)4^n$,
giving the constant $\frac{7\sqrt{\pi}}{8}$ at order $n^{-1/2}$.
Finally, $Q_1(n)\sim \frac14 B_n$ and dividing by $V_n\sim \frac n2 B_n$ gives $Q_1/V_n\sim (2n)^{-1}$.
\end{proof}

\section{Conclusions}\label{sec:concl}

We complete the program proposed by \citet{BK} for the Catalan class $\mathrm{Av}_n(213)$.
Starting from the decomposition induced by the position of the minimum $1$, we obtain closed functional equations for global statistics
of the grid graph $G_\pi$, and from them we deduce explicit formulas for both the total number of horizontal edges $H_n$
and the global degree totals $Q_r(n)$ ($r=1,2,3,4$).
The case $213$ exhibits behavior qualitatively different from the unrestricted one: avoidance rigidifies the Catalan gluing and concentrates
the degree distribution, so that the proportion of degree-$4$ vertices tends to $1$, with a leading deficit of order $n^{-1/2}$.
Moreover, by reversal, all results transfer immediately to $\mathrm{Av}_n(312)$.

On the other hand, the case $\mathrm{Av}_n(132)$ (and, by reversal, $\mathrm{Av}_n(231)$) has already been resolved recently in \citep{Hua132}.
Consequently, only the two remaining Catalan avoidance classes of length $3$,
\[
\mathrm{Av}_n(123)\quad \text{and}\quad \mathrm{Av}_n(321),
\]
remain open. Although they share Catalan enumeration, the structural constraints imposed by avoidance are substantially different and typically
require pattern-specific tools (a suitable choice of pivot, more rigid decompositions, and, in some instances, auxiliary boundary statistics)
to close the functional equations and obtain explicit formulas and asymptotic proportions.

\acknowledgements
\label{sec:ack}
This work was partially supported by the Vice-Rectorate for Research of the Universidad Nacional de San Crist\'obal de Huamanga (VRI-UNSCH).

\bibliographystyle{abbrvnat}
\bibliography{sample-dmtcs}
\label{sec:biblio}

\end{document}